\documentclass[11pt,reqno]{amsart}
\usepackage[utf8]{inputenc}
\usepackage[usenames,dvipsnames,svgnames,table]{xcolor}
\usepackage{amsthm, amssymb, amsmath,amsfonts, enumerate}
 \usepackage{fullpage}
\usepackage{verbatim}
\usepackage{graphicx, graphics}
\usepackage{algorithm}
\usepackage{multicol}
\usepackage{soul}
\usepackage{cancel,shuffle}
\usepackage{stackrel}
\usepackage{fdsymbol}
\usepackage{xargs}
\usepackage{pdflscape}
\usepackage{hyperref}
\usepackage{ulem}
\usepackage{float}


\usepackage{tikz}
\usetikzlibrary{arrows}
\tikzstyle{block}=[draw opacity=0.7,line width=1.4cm]

\usepackage[all,arc,curve,frame,color]{xy}
\usepackage{subfigure}
\usepackage{url}


\newtheorem{thm}{Theorem}[section]

\newtheorem{cor}[thm]{Corollary}
\newtheorem*{cor*}{Corollary}
\newtheorem{prop}[thm]{Proposition}

\newtheorem*{conjecture*}{Conjecture}

\theoremstyle{remark} 
\newtheorem*{question*}{Question}

\newtheorem{remark*}[thm]{Remark}

\theoremstyle{definition} 

\newtheorem*{define*}{Definition}

\numberwithin{equation}{section}  


\newcommand{\ZZ}{\mathbb{Z}} 

\title[Diophantine equations and Valuation Trees]{Solving Quadratic and Cubic Diophantine 
Equations using 2-adic Valuation Trees}
\author{Maila Brucal-Hallare} 
 \address[M.~Brucal-Hallare]{Mathematics Department\\Norfolk State University\\700 Park Avenue\\ Norfolk, VA 23504, USA} 
 \email{mcbrucal-hallare@nsu.edu}
 
\author{Eva G. Goedhart} 
 \address[E.~Goedhart]{Department of Mathematics and Statistics\\Williams College\\
33 Stetson Court\\
Williamstown, MA 01267, USA} 
 \email{eva.goedhart@williams.edu}
 
  \author{Ryan Max Riley}
\address[R.~Riley]{Independent Scholar\\
 Williamstown, MA 01267, USA} 
 \email{riley@post.harvard.edu}

 \author{Vaishavi Sharma}
\address[V.~Sharma]{Department of Mathematics\\Tulane University\\6823 St Charles Ave\\New Orleans \\ LA 70118, USA} 
 \email{vsharma1@tulane.edu}
 
\author{Bianca Thompson}
\address[B.~Thompson]{Department of Mathematics\\Westminster College\\
1840 S 1300 E\\
 Salt Lake City, UT 84105, USA} 
 \email{bthompson@westminstercollege.edu}

\begin{document}
\maketitle

\begin{abstract}
    For fixed integers $D \geq 0$ and $c \geq 3$, we demonstrate how to use $2$-adic valuation trees of sequences to analyze Diophantine equations of the form $x^2+D=2^cy$ and $x^3+D=2^cy$, for $y$ odd. Further, we show for what values $D \in \mathbb{Z}^+$, the numbers $x^3+D$ will generate infinite valuation trees, which lead to infinite solutions to the above Diophantine equations. 
\end{abstract}

 \section{Introduction}

The generalized Lebesgue-Ramanujan-Nagell equations have been investigated using various methods \cite{lesoydan} including elementary techniques in classical number theory, Diophantine approximation methods, the Baker method, the Bilu-Hanrot-Voutier theorem, and the modular approach.

In this paper, we propose a visual approach to generalized Lebesgue-Ramanujan-Nagell equations. We present a straightforward way of analyzing the existence of solutions to families of Diophantine equations of the form
\begin{equation}\label{equationfamily}
x^2 + D = 2^c y,   
\end{equation}
or
\begin{equation}\label{equationfamilycubic}
x^3 + D = 2^c y,  
\end{equation}
where, in both cases, $c, D \in \mathbb{Z}^+.$  To solve these classic Diophantine problems, we employ the construction of valuation trees, which are binary decision trees that are driven by modular arithmetic  (see \cite{reuf2020} for more details on valuation trees). The valuation-tree approach makes it possible to visualize the relationships among the solutions. As we will demonstrate in this paper, the $p$-adic valuation tree makes solutions relatively straightforward to identify on these Diophantine equations, solutions to which have been elusive when analyzed by other means. 

In $2008$, Saradha and Srinivasan claimed that the number of solutions to Lebesgue-Nagell type equations of the form $x^2+D=\lambda y^n$, where $\lambda$, $D \in \mathbb{Z}^+$ are fixed and $x$, $y$, and $n$ are positive integers, is finite \cite[p.5]{saradha2008generalized}.  This might suggest that the number of positive integer solutions to the equation \eqref{equationfamily} is also finite. However, it depends on the choice of $D \in \mathbb{Z}^+$. 

Much work has been done on Lebesgue-Nagell type equations \cite{lesoydan}, but much less is known about the generalized Ramanujan-Nagell equation \eqref{equationfamily}.  The principal contributions include work by Bennett, Filaseta, and Trifonov who prove \cite[Theorem 1.1]{bennett2008yet} that if $x$, $y$ and $c$ are positive integers satisfying \eqref{equationfamily}, then either $x \in 1, 3, 5, 11, 181$ or $y > \sqrt{x}$.  This set of $x$ values is the one conjectured by Ramanujan and proven by Nagell to be the only integer solutions to $x^2 + 7 = 2^c$. By using valuation trees we'll be able to show a related result for \eqref{equationfamily} and prove the following theorem.

We fix the following notations:

\begin{tabular}{l l }
$\nu_p(n)=r$ & the $p$-adic valuation of $n$ where $n=p^rb$ and $p\nmid b.$\\
$\ell$ & is the level of the valuation tree, where $\ell \geq 0$\\
$D$ & the constant term in the Diophantine equation.\\
$c$ & the $2$-adic valuation of $x^2+D$ or $x^3 + D$\\
$(x,y,c)$ & a solution to the Diophantine equation $x^2+D = 2^cy$ or $x^3+D = 2^cy$

\end{tabular}

\begin{thm}\label{main theorem}
Let $D \in \mathbb{Z}^+$ be given. Consider the Diophantine equation $x^2+D=2^cy,$ with $2 \nmid y.$
\begin{enumerate}[a.]
\item If $D \neq 2^{2k}(8j+7), k,j \in \mathbb{Z}_{\geq 0}$,
then there exist infinitely many $c \in \mathbb{Z}$ such that $x^2 + D = 2^c y$ has no non-trivial integer solutions. 
\item If $D = 2^{2k}(8j+7), k,j \in \mathbb{Z}_{\geq 0},
$
then $x^2 + D = 2^c y$ has nontrivial solutions for all except a finite number of $c \in \mathbb{Z}$. Further, for $D=8j+7$ the set of values for $c$ does not include $c = 1$ or $2$.
\end{enumerate}
\end{thm}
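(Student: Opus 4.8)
The plan is first to strip the problem down to a pure $2$-adic statement. A triple $(x,y,c)$ with $2\nmid y$ solves $x^2+D=2^cy$ precisely when $c=\nu_2(x^2+D)$, so part (a) asks us to prove that $S_D:=\{\nu_2(x^2+D):x\in\mathbb Z\}$ omits infinitely many nonnegative integers, and part (b) asks us to prove that $S_D$ is cofinite and to identify the finitely many omitted values. The trivial solution $x=0$ contributes only the single value $\nu_2(D)$ to $S_D$, so it plays no role in deciding finiteness versus cofiniteness. The only structural input I need is the classical description of odd squares modulo $2^c$: for $c\ge3$ an odd integer is a square in $\mathbb Z/2^c\mathbb Z$ if and only if it is $\equiv1\pmod 8$, and the four square roots of such a residue are $\pm u$ and $\pm u+2^{c-1}$.

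The core of the argument is the following lemma, which is exactly the mechanism that forces the $2$-adic valuation tree of $x^2+D$ to branch forever: \emph{if $D\equiv7\pmod8$ and $c\ge3$, then some (necessarily odd) $x$ satisfies $\nu_2(x^2+D)=c$ exactly.} To see this, observe that $-D\equiv1\pmod8$, so there is an odd $u$ with $u^2\equiv-D\pmod{2^c}$; since $c\ge3$ gives $2c-2\ge c+1$, we get $(u+2^{c-1})^2\equiv u^2+2^cu\equiv u^2+2^c\pmod{2^{c+1}}$, and hence exactly one of the two odd numbers $u$ and $u+2^{c-1}$ satisfies $x^2+D\equiv2^c\pmod{2^{c+1}}$, i.e.\ has valuation exactly $c$. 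This ``perturb by $2^{c-1}$'' step is the one place where genuine $2$-adic lifting is used, and I expect it --- together with getting the indexing of the case analysis exactly right --- to be the main obstacle; the rest is bookkeeping.

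The bookkeeping is a case split on $s:=\nu_2(D)$. Write $D=2^sD'$ with $D'$ odd; for $x\ne0$ put $a=\nu_2(x)$ and $x=2^am$ with $m$ odd, so that $\nu_2(x^2+D)=\nu_2(2^{2a}m^2+2^sD')$. If $2a\ne s$ this is $\min(2a,s)$; if $2a=s$ --- which forces $s=2k$ even and $a=k$ --- it equals $2k+\nu_2(m^2+D')$, and since $m^2\equiv1\pmod8$ this is $2k+1$ when $D'\equiv1$ or $5\pmod8$, is $2k+2$ when $D'\equiv3\pmod8$, and ranges over all of $\{2k+3,2k+4,\dots\}$ when $D'\equiv7\pmod8$ (this is where the lemma enters). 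Assembling the contributions over all $a\ge0$ together with $x=0$: if $s$ is odd then $S_D=\{0,2,4,\dots,s-1,s\}$; if $s=2k$ and $D'\not\equiv7\pmod8$ then $S_D=\{0,2,\dots,2k\}\cup\{2k+1\}$ or $\{0,2,\dots,2k\}\cup\{2k+2\}$; and if $s=2k$ and $D'\equiv7\pmod8$ --- that is, $D=2^{2k}(8j+7)$ --- then $S_D=\{0,2,\dots,2k\}\cup\{2k+3,2k+4,\dots\}$, which omits exactly the $k+2$ values $1,3,\dots,2k-1,2k+1,2k+2$. The first two cases give part (a), the third gives part (b), and when $k=0$ the omitted set is $\{1,2\}$, which is the final clause.

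Finally I would observe that the word ``nontrivial'' changes nothing in part (b): every element of $S_D$ is already attained by some $x\ne0$ --- take $x=2^{c/2}$ for even $c\le2k-2$, $x=2^{k+1}$ for $c=2k$, and the odd $2^km$ supplied by the lemma for $c\ge2k+3$ --- so discarding $x=0$ does not shrink the attained set. In the paper's language, the cases in part (a) are precisely those in which the $2$-adic valuation tree of $x^2+D$ is finite, so only finitely many values occur as leaf valuations, whereas part (b) is the single infinite tree whose leaves carry every sufficiently large $c$; the small valuations $c=1,2$ fail to appear when $D=8j+7$ because the one non-leaf node, the ``$x$ odd'' branch, already sits above $\nu_2(x^2+D)\ge3$.
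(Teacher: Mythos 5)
Your proof is correct, but it takes a genuinely different route from the paper. The paper's proof of Theorem~\ref{main theorem} is essentially a reduction to cited results: it invokes \cite{almodovaraclosed} and \cite{Olena2020} for the dichotomy that the $2$-adic valuation tree of $x^2+D$ is finite unless $D=2^{2k}(8j+7)$, in which case it is infinite with eventually every valuation appearing, and then supplements this with one direct computation showing $\nu_2(x^2+D)\geq 3$ for odd $x$ when $D\equiv 7\pmod 8$. You instead prove the whole dichotomy from scratch: your key lemma --- that for $D'\equiv 7\pmod 8$ and any $c\geq 3$ exactly one of $u$, $u+2^{c-1}$ realizes $\nu_2(x^2+D')=c$, where $u$ is an odd square root of $-D'$ modulo $2^c$ --- is a clean, self-contained substitute for the cited infinite-tree theorems, resting only on the classical classification of odd squares modulo $2^c$. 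Your case analysis on $s=\nu_2(D)$ via $\nu_2(2^{2a}m^2+2^sD')=\min(2a,s)$ (when $2a\neq s$) then yields the \emph{exact} valuation set $S_D$ in every case, which is strictly more information than the theorem asserts: it gives the complete list of omitted $c$ (namely $1,3,\dots,2k-1,2k+1,2k+2$ in case (b)), recovers the $c\neq 1,2$ clause as the $k=0$ instance, and handles the ``nontrivial'' caveat explicitly, which the paper does not address. The price is that you must correctly quote the structure of $(\mathbb{Z}/2^c\mathbb{Z})^\times$ and its squares, but that is standard; on balance your argument is more elementary, more complete, and independent of the external tree results the paper leans on.
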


The cubic equation (\ref{equationfamilycubic}) has a similar rich history of being studied. For $y=1$, Beukers \cite{beukers1981generalized} has proven that (\ref{equationfamilycubic}) has at most five solutions in $x\in\ZZ$. Alvarado et al \cite[Theorem 1.1]{sunit2020} analyzed the cubic Ramanujan-Nagell equation $x^3 + D = q^n$ for prime $q  > 3$ and $n,k > 0$, and where $D = 3^{k}$. Letting $q$ be a prime and $3 < q \leq 500$, they prove and list all integer solutions to the equation, and they claim that their method can also be used to find the integer solutions to the equation $x^3 + p^{k} = q^n$ where $p,q$ are different odd primes. Considering the Fermat type equation, $x^3 + y^3 = az^3$, for $a \in \mathbb{Z}$ with $a>2$ not divisible by the cube of any prime, Nagell \cite[p. 246-248]{nagellnumbertheory} proved that this family of Diophantine equations has either no solution or infinitely many solutions in relatively prime integers $x$, $y$, and $z$, with $z \neq 0.$ Nagell’s theorem can be used to prove that, for some values of $c$, $D$, and $y$, the related Diophantine equation \eqref{equationfamilycubic} has either no solution or infinite solutions. Although Nagell's theorem and method can be manipulated and used to find some solutions of the Diophantine equation \eqref{equationfamilycubic}, that approach is less comprehensive than solving the cubic Diophantine equation using $2$-adic valuation trees as we do in this paper. By using our valuation trees we prove the following theorem.

\begin{thm}\label{mainthmdeg3}
Let $D \in \mathbb{Z}^+$ be fixed. Consider the Diophantine equation $x^3+D=2^cy,$ with $y$ odd.
\begin{enumerate}[a.]
\item If $D \neq (2^{3k})(2j+1)$ for $k, j \in\ZZ_{\geq 0}$, then there are finitely many $c \in \mathbb{Z}$ for which the Diophantine equation has nontrivial solutions. Specifically, if
\begin{enumerate}
\item $D=2(2j+1),$ then $c=0$ or $1$ and $x$ is even or odd, respectively.
\item $D=2^2(2j+1)$, then $c=0$ or $2$ and $x$ is odd or even, respectively. 
\item $D=2^{3k+i},$ $i\in\{1,2\}$ and $k\in\mathbb{Z}^+$, then solutions are of the form $x\equiv 2^{\ell}\mod 2^{\ell+1}$ with corresponding $c=3\ell$ for $\ell<k+i.$ If $\ell=k+i$, then the form of our $x$ value is $x\equiv 0 \mod 2^{k+i}$ and $c=3k+i$. 
\end{enumerate}
\item If $D = 2^{3k}(2j+1)$ for $k, j \in\ZZ_{\geq 0}$, then $x^3 + D = 2^c y$ has nontrivial solutions for  all $c \geq 0$ except for finitely many values.
\end{enumerate}
\end{thm}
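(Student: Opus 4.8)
The plan is to translate the equation into a statement about $2$-adic valuations and read off the answer from the valuation tree of $x\mapsto \nu_2(x^3+D)$. Since we require $y$ odd, every solution $(x,y,c)$ forces $c=\nu_2(x^3+D)$, and conversely any $x$ with $x^3+D\neq 0$ produces the solution $(x,(x^3+D)/2^{\nu_2(x^3+D)},\nu_2(x^3+D))$. So the theorem is equivalent to describing the set $S_D=\{\nu_2(x^3+D): x\in\mathbb{Z},\ x^3+D\neq 0\}$. Write $m=\nu_2(D)$. The engine of the whole argument is the ultrametric identity $\nu_2(x^3+D)=\min(3\nu_2(x),m)$ whenever $3\nu_2(x)\neq m$, together with the refinement needed when $3\nu_2(x)=m$; the latter can occur only when $3\mid m$, which is precisely the dividing line between cases (a) and (b).

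For part (a) we have $3\nmid m$, hence $3\nu_2(x)\neq m$ for every $x$, so $\nu_2(x^3+D)=\min(3\nu_2(x),m)\leq m$ always. Thus $S_D$ is finite, and correspondingly the valuation tree terminates at depth about $\lceil m/3\rceil$: along any branch, once the residue of $x$ pins $\nu_2(x)$ to some $\ell$ with $3\ell<m$ the label is the constant $3\ell$, and once it forces $3\nu_2(x)>m$ the label is the constant $m$. The explicit sub-cases (i)--(iii) are then the bookkeeping of this tabulation: writing $\nu_2(x)=\ell$, one gets $c=3\ell$ when $3\ell<m$ and $c=m$ otherwise, and for each value of $c$ arising this way one checks that some admissible $x$ genuinely occurs and yields $y$ odd and nonzero (automatic, since infinitely many $x$ in each class are available), which pins down the stated congruences $x\equiv 2^{\ell}\pmod{2^{\ell+1}}$ and the matching value of $c$.

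For part (b) write $m=3k$ and $D=2^{3k}d$ with $d$ odd. If $\nu_2(x)=\ell\neq k$ then as before $\nu_2(x^3+D)=\min(3\ell,3k)\leq 3k$, so these branches of the tree are finite and contribute exactly $\{0,3,6,\dots,3k\}$ to $S_D$. The interesting branch is $\nu_2(x)=k$: writing $x=2^{k}u$ with $u$ odd gives $\nu_2(x^3+D)=3k+\nu_2(u^3+d)$, and $u^3+d$ is even. The key lemma is that cubing is a bijective isometry on the odd $2$-adic integers: since $u^2+uv+v^2$ is odd, $\nu_2(u^3-v^3)=\nu_2(u-v)$, and $u\mapsto u^3$ is onto $\mathbb{Z}_2^{\times}$ because $\gcd(3,2^{t-1})=1$ for all $t$. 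Hence there is a unique $u^{\ast}\in\mathbb{Z}_2^{\times}$ with $(u^{\ast})^3=-d$, and $\nu_2(u^3+d)=\nu_2(u-u^{\ast})$ realizes every integer $\geq 1$ as $u$ runs over odd integers. Consequently this single infinite branch of the valuation tree contributes $\{3k+1,3k+2,3k+3,\dots\}$ to $S_D$, so $S_D=\{0,3,\dots,3k\}\cup\{3k+1,3k+2,\dots\}$, whose complement in $\mathbb{Z}_{\geq 0}$ is the finite set $\{1,2,4,5,\dots,3k-2,3k-1\}$. This proves (b) and identifies precisely which $D$ make the tree infinite.

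The step I expect to be the main obstacle is the surjectivity/isometry lemma for cubing modulo powers of $2$ in part (b) --- more precisely, verifying that $\nu_2(u^3+d)$ hits every value $\geq 1$ without gaps, rather than merely that it can be made large. I would prove this by induction on the modulus $2^{t}$ (a Hensel-type lifting argument, using that the formal derivative $3u^2$ is a $2$-adic unit), and state it as a standalone lemma so that both the ``$\nu_2(x)=k$'' computation here and the termination analysis in part (a) can quote it. Everything else --- the ultrametric identity, the tabulation of the finite branches, and checking that each admissible $c$ is attained by a nonzero value of $x^3+D$ --- is routine once that lemma is in hand.
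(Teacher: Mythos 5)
Your proposal is correct, and while it shares the paper's overall strategy of converting the Diophantine question into a description of the range of $\nu_2(x^3+D)$, the two key lemmas you use are genuinely different from the ones in the paper. The paper splits the work into two propositions: for $D=2^{3k}(2j+1)$ it invokes Hensel's lemma to produce a $2$-adic cube root of $-D$ and then cites an external result to conclude the tree has an infinite branch; for the remaining $D$ it constructs the finite trees explicitly, case by case, with an induction on $k$ for $D=2^{3k+i}$. You instead organize everything around the single ultrametric identity $\nu_2(x^3+D)=\min\bigl(3\nu_2(x),\nu_2(D)\bigr)$ valid off the critical stratum $3\nu_2(x)=\nu_2(D)$, which disposes of part (a) and all its sub-cases uniformly (your tabulation agrees with the paper's Proposition on finite trees, and where your indices differ from the theorem statement for $i=2$, yours are the correct ones), and you handle the critical stratum in part (b) via the isometry $\nu_2(u^3-v^3)=\nu_2(u-v)$ for odd $u,v$ together with surjectivity of cubing on $\ZZ_2^{\times}$. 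This buys you something the paper does not actually establish: the paper passes from ``the tree is infinite'' to ``solutions exist for all but finitely many $c$,'' but an infinite tree a priori only guarantees infinitely many attained valuations, not a cofinite set of them (in the quadratic case the paper cites a separate no-gaps result; in the cubic case it does not). Your argument that $\nu_2(u-u^{\ast})$ realizes every value $\geq 1$ as $u$ ranges over odd integers yields the exact range $\{0,3,\dots,3k\}\cup\ZZ_{\geq 3k+1}$ and hence an explicit finite exceptional set, which is a sharper and more complete proof of part (b). The one point to write carefully is the one you already flagged: $u^{\ast}$ lives in $\ZZ_2$ rather than $\ZZ$, so to realize $\nu_2(u-u^{\ast})=t$ you should pick an integer $u$ in the odd residue class $u^{\ast}+2^{t}\bmod 2^{t+1}$; with that stated, the lemma and the proof are sound.
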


In Section~\ref{sec:tree}, we first prove that a special case of \eqref{equationfamily} for $c=0$ and all $c\geq 3$  
\begin{equation}\label{quad7}
x^2 + 7 = 2^cy,
\end{equation}
where $y$ is odd, has an infinite number of positive integer solutions. We use the valuation tree to determine the form of all of these solutions. We will use this section to demonstrate how the construction of valuation trees is done. In Sections ~\ref{sec:proof} and \ref{sec:cubic}, we employ valuation trees to prove Theorem \ref{main theorem} and Theorem \ref{mainthmdeg3}. In Section~\ref{sec:examples}, we conclude with a few examples of finding the nontrivial integer solutions for specific $D \in \mathbb{Z}^+.$ We discuss further directions of this approach in Section~\ref{sec:conclusion} and include examples in the Appendix.

\section{2-adic Valuation Tree for $x^2+7$}\label{sec:tree}
For an arbitrary integer $x$, we can create a binary tree to visualize the 2-adic valuation of $x^2+7$ for different values of $x$. In general, this value is difficult to determine. The value $\nu_2(x^2+7)$ will change depending on $x$ mod $2^\ell$ for each $\ell\in\ZZ^+$. A binary tree allows us to visualize the effect that the different classes of $x\mod 2^\ell$ have on $\nu_2(x^2+7)$, for $\ell>0$. For each $\ell\in\ZZ^+$, the horizontal level, $\ell$, of the tree represents all the possible equivalence classes of $x$ when we look at it $\mod 2^{\ell}$.

To see this explicitly, let $\ell=1$. Consider $x\equiv 0$ or $1\mod 2$. This is level $1$ on the tree in Figure~\ref{fig:plus7level1}, and $0,1$ are the labels on the edges or branches of the tree. For $x\equiv 0\mod 2$, write $x=2n$, an even number, for some $n\in\mathbb{Z}^+$. Then $\nu_2(x^2+7)=\nu_2((2n)^2+7)=0$, since the sum of an even and an odd is an odd number.

For $x$ odd, $x=2n+1$, where $n\in\mathbb{Z}^+$
\begin{align*}
  \nu_2((2n+1)^2+7) &= \nu_2(2^2(n^2+n+2))\\
  &= \nu_2(2^2)+\nu_2(n^2+n+2)\\
  &= 2+\nu_2(n^2+n+2)\\
  &\geq 2+1=3.
\end{align*}
The $1$ comes from the fact that (Case 1) if $n$ is even, then $n^2 + n + 2$ is also even so that there is at least $1$ factor of $2$; (Case 2) if $n$ is odd, then $n^2 + n + 2$ is even: say, $n = 2k+1$ implies that $n^2 + n + 2 = (4k^2 + 4k + 1) + (2k + 1) + 2 = 4k^2 + 6k + 4 = 2(2k^2 + 3k + 2)$, which has at least $1$ factor of $2$.

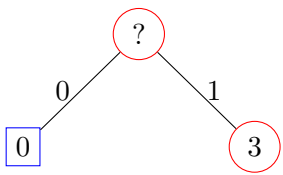
\begin{figure}[h]
\begin{tikzpicture}[sibling distance=8em,
]]
\tikzstyle{level 3}=[level distance=15mm,sibling distance=1.5cm]
\tikzstyle{level 4}=[level distance=15mm,sibling distance=1.2cm]
\tikzset{circle node/.style={shape=circle,draw=red,rounded corners,
    draw, align=center,
    top color=white, bottom color=white},square node/.style={shape=rectangle,draw=blue,
    draw, align=center,
    top color=white, bottom color=white}}
  \node[circle node] {$?$}
    child {node[square node] {0}edge from parent node[left]{$0$} }
      child { node[circle node] {3} 
      edge from parent node[right]{$1$}
        }
        ;
\end{tikzpicture}
\caption{Level 1 of the $2$-adic valuation tree for $x^2+7.$}
\label{fig:plus7level1}
\end{figure}

Depending on whether $n$ is even or odd, the $2$-adic valuation of our sequence may change. So we should subdivide our $x$ further.

Let us consider that $x$ is either $x=2^2n+1$ or $x=2^2n+3,$ that is, $x\equiv 1$ or $3\mod 4.$ We box the $1$ and $3$ below in the calculations to show what role they play in determining our valuations. 
We see that
\[(2^2n+\fbox{1})^2+7=2^4n^2+2^3n+1+7\]
and
\[(2^2n+\fbox{3})^2+7=2^4n^2+2^3(3n)+9+7\]
both depend on what $n$ will be to determine the $2$-adic valuation. So we get the tree in Figure~\ref{fig:plus7level2}. 

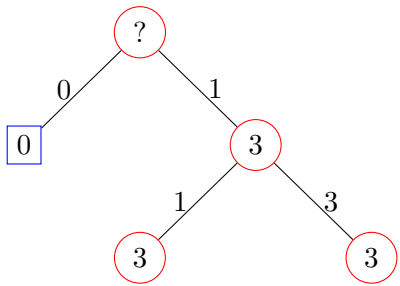
\begin{figure}[H]
\begin{tikzpicture}[sibling distance=8em,
]]
\tikzstyle{level 3}=[level distance=15mm,sibling distance=1.5cm]
\tikzstyle{level 4}=[level distance=15mm,sibling distance=1.2cm]
\tikzset{circle node/.style={shape=circle,draw=red,rounded corners,
    draw, align=center,
    top color=white, bottom color=white},square node/.style={shape=rectangle,draw=blue,
    draw, align=center,
    top color=white, bottom color=white}}
  \node[circle node] {$?$}
    child {node[square node] {0}edge from parent node[left]{$0$} }
    child { node[circle node] {3} 
      child { node[circle node]  {3}edge from parent node[left]{1}
         }
      child { node[circle node] {3}edge from parent node[right]{3} 
        } edge from parent node[right]{$1$}
        }
        ;
\end{tikzpicture}
\caption{The first two levels of the 2-adic valuation tree for $x^2+7.$}
\label{fig:plus7level2}
\end{figure}

So let's subdivide each of these values further with $x\equiv 1,3,5,7 \mod 8$
\[ x=2^3n+1,\ x=2^3n+5,  \ x=2^3n+3,\ x=2^3n+7.  \]

This order is preferred because ``1" and ``5" are on the left side of the tree, and ``3" and ``7" are on the right side of the tree, as in Figure~\ref{fig:plus7}.

Plugging these values into $x^2+7$ and simplifying yields

\begin{align*}
  2^6n+2^{3+1}\cdot\fbox{1}\cdot n+\fbox{1}^2+7\\
  2^6n+2^{3+1}\cdot\fbox{5}\cdot n+\fbox{5}^2+7\\
  2^6n+2^{3+1}\cdot\fbox{3}\cdot n+\fbox{3}^2+7\\
  2^6n+2^{3+1}\cdot\fbox{7}\cdot n+\fbox{7}^2+7\\
\end{align*}
Simplify each further and pull out the $2$'s to get
\begin{align}
    2^3(2^3n+2^{1}\cdot\fbox{1}\cdot n+1)\\\label{no2}
    2^4(2^2n+\fbox{5}\cdot n+2)\\
    2^4(2^2n+\fbox{3}\cdot n+1)\\
    2^3(2^3n+2^{1}\cdot\fbox{7}\cdot n+7)\label{no2too}
\end{align}
In equations \eqref{no2} and \eqref{no2too}, there are no more $2$'s that can be factored out, hence the $2$-adic valuations for \eqref{no2} and \eqref{no2too} is $3.$  But the other two equations should branch further as we have not yet determined their exact $2$-adic valuation. 

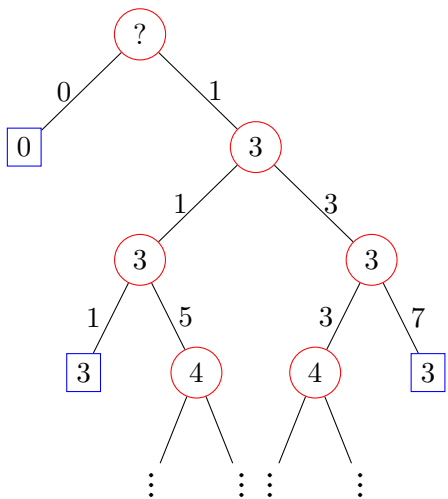
\begin{figure}[H]
\begin{tikzpicture}[sibling distance=8em,
]]
\tikzstyle{level 3}=[level distance=15mm,sibling distance=1.5cm]
\tikzstyle{level 4}=[level distance=15mm,sibling distance=1.2cm]
\tikzset{circle node/.style={shape=circle,draw=red,rounded corners,
    draw, align=center,
    top color=white, bottom color=white},square node/.style={shape=rectangle,draw=blue,
    draw, align=center,
    top color=white, bottom color=white}}
  \node[circle node] {$?$}
    child {node[square node] {0}edge from parent node[left]{$0$} }
    child { node[circle node] {3} 
      child { node[circle node]  {3}
        child { node[square node] {3} edge from parent node[left]{$1$}} 
        child { node[circle node]  {4} 
            child{node{$\vdots$}}
            child{node{$\vdots$}}
            edge from parent node[right]{$5$} 
            }edge from parent node[left]{$1$}
         }
      child { node[circle node] {3} 
        child{node[circle node]{4}
            child{node{$\vdots$}}
            child{node{$\vdots$}}edge from parent node[left]{$3$}} 
        child{node[square node]{3} edge from parent node[right]{$7$}}edge from parent node[right]{$3$}
        } edge from parent node[right]{$1$}
        }
        ;
\end{tikzpicture}
\caption{The first 3 levels of the 2-adic valuation tree for $x^2+7.$}
\label{fig:plus7}
\end{figure}

All of this information can be encoded in a binary decision tree. We begin at the initial node with $x^2+7$ where $x=n$, which is not yet categorized modulo $2^\ell,$ where $\ell$ is the level of our tree. Then the weight of each branch connected to this first node corresponds to the equivalence classes modulo $2$, because we're on the first level of our tree. The weights of the branches below each node correspond to the equivalence classes modulo $2^\ell$. Specifically they should be the two classes that are subsets of the equivalence class on the branch above them. For example in Figure \ref{fig:plus7}, we have branch $1$ and $5$ branching from the red circled 3 which is connected to the branch of weight $1$. This corresponds to being in the equivalence classes $1 \mod 2^3$ and $5\mod 2^3$, which are both the forms a number can take if they are also $1\mod 2^2.$ Further, we use blue squares to showcase when we know the exact valuation for a given classification of $x$, and red circles for when the valuation is still not known exactly, but we do know that it's greater than or equal to the value in the circle. We will always branch from the red circles and terminate on the blue squares. 

It has been shown \cite{reuf2020} that this tree is infinite, symmetrical, and has valuation range $\{0, 3, 4, 5, 6, ...\}$. Consider
\begin{equation}
\nu_2(x^2+7)=c,  \:\: \mbox{for an arbitrary  } c \geq3 
\end{equation}
We know by the 2-adic valuation tree for $c\geq 3$ there is always going to be a value for $x$ that makes the equation true. This means that 
\[x^2+7=2^cy\]
for some $y\in\ZZ$ where $2\nmid y.$ 
Thus, the 2-adic valuation range of $c$ corresponds to a Diophantine equation, and it will have non-trivial solutions. 
We now turn our attention to finding those solutions $(x,y,c)$ to the Diophantine equation $x^2+7=2^cy$.

 If we know a solution $(x,y,c)$ to $x^2+y=2^cy,$ $y$ odd, we can find the next solution recursively.

 \begin{prop}\label{prop:treerecursion}
The sequence of solution pairs $\{(x_{c}, y_{c})\}_{c \geq 3}$ of the Diophantine equation $x^2 + 7 = 2^{c}y$ corresponds to the minimum of the branch residues at level $\ell$ for $x_{c}$ and the non-terminating node behavior at level $\ell - 1$ for $y_{c}$, where $c = \ell + 1.$     
\end{prop}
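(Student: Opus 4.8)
The plan is to unwind the proposition into two arithmetic assertions about $f(x)=x^{2}+7$ and prove them by $2$-adic lifting. Recall from Section~\ref{sec:tree} that a node at level $\ell$ records a class $x\equiv r\pmod{2^{\ell}}$, its branch residue $r$ being the string of edge labels back to the root, together with the integer $v=\min\{\nu_2(f(x)):x\equiv r\!\!\pmod{2^{\ell}}\}$; the node is a blue square when $\nu_2(f(x))$ is constant on the class (equal to $v$) and a red, non-terminating circle otherwise. In these terms the proposition says: (A) for each $c\ge 3$ the solution set of $\nu_2(f(x))=c$ is a union of two residue classes $\pm u_c\pmod{2^{c}}$ with $0<u_c<2^{c-1}$; these are the two blue squares labelled $c$, first appearing at level $c$, so that $x_c=u_c$ is both the least positive solution and, among those blue squares, the smaller branch residue; and (B) the pair $(x_c,y_c)=\bigl(u_c,\,(u_c^{2}+7)/2^{c}\bigr)$, with $y_c$ odd, obeys a one-step recursion whose only free choice is a sign, read off from the residue data at the non-terminating node one level up.

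For (A) I would use that $-7\equiv 1\pmod 8$, so for $c\ge 3$ the congruence $x^{2}\equiv -7\pmod{2^{c}}$ has exactly four solutions, which for any one root $r$ are $r,\,-r,\,r+2^{c-1},\,-r+2^{c-1}$. The crux is that translation by $2^{c-1}$ swaps the two roots of valuation exactly $c$ with the two of valuation $\ge c+1$: writing $f(r)=2^{c}s$,
\[
f\bigl(r+2^{c-1}\bigr)=2^{c}(s+r)+2^{2c-2},
\]
which, since $r$ is odd, has $\nu_2$ equal to $c$ precisely when $s$ is even, i.e.\ precisely when $\nu_2(f(r))>c$. Hence exactly two of the four roots — call them $u_c$ and $2^{c}-u_c$ with $u_c\in(0,2^{c-1})$ — have valuation exactly $c$; these give the two blue squares labelled $c$, which cannot be resolved before level $c$, and among them $u_c$ is the smaller branch residue, so $x_c=u_c$. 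The case $c=3$, where $2^{2c-2}=2^{c+1}$ makes the two summands above collide, is checked directly and is consistent with the base node $x_3=y_3=1$ already computed in Section~\ref{sec:tree}.

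For (B) I would lift from level $c$ to level $c+1$. Since $\nu_2(f(u_{c+1}))=c+1$, reduction modulo $2^{c}$ forces $u_{c+1}$ to reduce to one of the two ``extra-divisible'' roots, so $u_{c+1}=2^{c-1}+\varepsilon_c u_c$ for a sign $\varepsilon_c\in\{\pm1\}$ (both options already lie in $(0,2^{c})$, which is why this is $u_{c+1}$ itself). Substituting and using $u_c^{2}+7=2^{c}y_c$,
\[
u_{c+1}^{2}+7=2^{2c-2}+\varepsilon_c 2^{c}u_c+2^{c}y_c=2^{c}\bigl(2^{c-2}+\varepsilon_c u_c+y_c\bigr),
\]
so $\varepsilon_c$ is forced to be the unique sign with $2^{c-2}+\varepsilon_c u_c+y_c\equiv 2\pmod 4$ (the other sign yields valuation $\ge c+2$), and then $y_{c+1}=\tfrac12\bigl(2^{c-2}+\varepsilon_c u_c+y_c\bigr)$, which is odd. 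For $c\ge 4$ the term $2^{c-2}$ vanishes mod $4$, so the rule becomes $\varepsilon_c=+1$ iff $u_c\equiv y_c\pmod 4$ — exactly the parity datum carried by the non-terminating red node one level up, whose defining polynomial (the quotient $f(x)/2^{c}$ along that branch) has constant term $y_c$. Finally one checks that the two values of $\varepsilon_c$ index the two children of that red node — one a blue square carrying the valuation-$(c+1)$ solution $x_{c+1}$, and one again red — so the recursion is literally ``descend to the blue-square child,'' as the picture shows; inducting from $c=3$ then produces the whole sequence $\{(x_c,y_c)\}_{c\ge 3}$.

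The step I expect to be the real obstacle is getting uniform control of the sign $\varepsilon_c$ — equivalently, proving once and for all that every level $\ell\ge 2$ carries exactly two red nodes, each spawning exactly one blue leaf at every later level, and that the ``extra-divisible'' pair of square roots of $-7$ mod $2^{c}$ is separated from the other pair by precisely the parity datum of the preceding non-terminating node. This is a standard but delicate $2$-adic computation: the prime $2$ forces care with mod-$8$ versus mod-$4$ reductions and with the degenerate low cases (notably $c=3$, where $2^{2c-2}=2^{c+1}$). Granting that, identifying $x_c$ and $y_c$ with the displayed tree data and verifying the recursion are bookkeeping.
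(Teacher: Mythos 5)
Your argument is arithmetically sound, but it does not prove the statement the paper's own proof (and the Corollary and Table~\ref{tab:recursion} that depend on it) actually establishes: you have quietly replaced the sequence in the Proposition with a different one. The paper takes $x_{c}$ to be the \emph{smaller of the two non-terminating branch residues at level $\ell=c-1$}, i.e.\ the least residue $r\bmod 2^{c-1}$ with $\nu_{2}(r^{2}+7)\geq c$; consequently $y_{c}=(x_{c}^{2}+7)/2^{c}$ may be \emph{even} (e.g.\ $(x_{6},y_{6})=(11,2)$, since $11^{2}+7=2^{7}$), and the recursion is the two-case rule $x_{c}=x_{c-1}$ when $y_{c-1}$ is even and $x_{c}=2^{c-2}-x_{c-1}$ when $y_{c-1}$ is odd, driven by whether the branch residue $x_{\ell}$ itself leads to a terminating or non-terminating node. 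Your $u_{c}$ is instead the smaller of the two residues mod $2^{c}$ with $\nu_{2}(u_{c}^{2}+7)=c$ \emph{exactly} (a terminating node at level $c$, one level lower in the tree than the Proposition's $\ell=c-1$), so your $y_{c}$ is always odd and your recursion has the shape $u_{c+1}=2^{c-1}+\varepsilon_{c}u_{c}$. The two sequences genuinely diverge: at $c=6$ the paper's table gives $(11,2)$ while your construction gives $(21,7)$. So as a verification of Proposition~\ref{prop:treerecursion} and of Corollary~\ref{thm:recursion} as stated, there is a gap --- you would still need to show that the minimum is taken over the non-terminating residues at level $c-1$ and derive the parity-of-$y_{c-1}$ case split, which is exactly the content of the paper's symmetry argument ($x_{\ell}<x_{\ell}+2^{\ell-1}$ on the left trunk, $2^{\ell-1}-x_{\ell}<2^{\ell}-x_{\ell}$ on the right, and the computation $\nu_{2}((2^{\ell}n+x_{\ell})^{2}+7)=\ell+\nu_{2}(2^{\ell}n^{2}+2^{\ell}nx_{\ell}+y_{\ell})$ that reduces everything to the parity of $y_{\ell}$).

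That said, what you prove is correct and is in some respects the cleaner statement: your use of the four square roots $\pm r,\,2^{c-1}\pm r$ of $-7$ bmod $2^{c}$, the observation that translation by $2^{c-1}$ swaps the exact-valuation pair with the extra-divisible pair, and the forced sign $\varepsilon_{c}$ with $2^{c-2}+\varepsilon_{c}u_{c}+y_{c}\equiv 2\pmod 4$ together give a self-contained recursion producing, for every $c\geq 3$, a solution with $y$ \emph{odd} --- which is what the surrounding section claims to want, and which the paper's own recursion only delivers after skipping the levels where $y_{c}$ is even. If you intend this as a replacement for the Proposition rather than a proof of it, say so explicitly and reconcile it with Table~\ref{tab:recursion}; if you intend it as a proof of the Proposition as written, you need to switch to the level-$(c-1)$ non-terminating residues and recover the $y_{c-1}$-parity dichotomy.
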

 
 \begin{proof}
We know by ~\cite{reuf2020} that the tree for $x^2+7$ for every level $\ell \geq 3$ has four branches, each of which has a corresponding $2$-adic valuation node, and those branches are symmetric. Through the properties of modular arithemetic and the fact that the tree is symmetric, we know, among these four nodes, two of the nodes are terminating with 2-adic valuation equal to $\ell$ while the other two nodes are non-terminating nodes with 2-adic valuation at least $\ell + 1$. Further, we can describe exactly what the values of these branches are.

The two branches on the left trunk represent numbers of the form 
\[2^{\ell}n + x_{\ell},  \:\:\:\: 2^{\ell}n + x_{\ell} + 2^{\ell-1}\]
while the two branches on the right trunk represent numbers of the form 
\[2^{\ell}n -(2^{\ell-1} + x_{\ell}),  \:\:\:\: 2^{\ell}n + 2^{\ell}- x_{\ell}.\]

We will refer to the values $x_{\ell},\ x_{\ell} + 2^{\ell-1},\ -(2^{\ell-1}+x_{\ell}),$ and $-x_{\ell} $ mod $2^{\ell}$ as \textit{branch residues}.  Observe that for any given $\ell$, the four branch residues are determined by $x_{\ell}$.  The value of $x_{\ell}$ depends on the 2-adic valuation node at the previous level $\ell-1$.

The two branch residues on the left trunk, namely $x_{\ell}$ and $x_{\ell} + 2^{\ell - 1}$, emanate from a non-terminating node in the $\ell - 1$ level, that is, the 2-adic valuation node with value greater than or equal to $\ell$.  The branch residue $x_{\ell}$ is equal to the previous branch residue $x_{\ell-1}$.

The following property holds for the four branch residues: for the left trunk, we have

\[x_{\ell} < x_{\ell} + 2^{\ell -1} \mod 2^\ell,\]
while for the right trunk, we have

\[-(2^{\ell - 1} + x_{\ell}) <  - x_{\ell} \mod 2^{\ell}\]

Thus, in order to establish that $x_{\ell} = x_{c - 1}$ is the minimum branch residue at level $\ell$, we only need to check what the 2-adic valuation is at either the branch residue $x_{\ell} \mod 2^{\ell}$ or $-(2^{\ell - 1} + x_{\ell})\mod 2^{\ell}$.  Moreover, due to the symmetry of the tree, if the 2-adic valuation at the branch residue $x_{\ell}$ leads to a non-terminating node, then it follows that the branch residue at $-(2^{\ell - 1} + x_{\ell})\mod 2^{\ell}$ leads to a terminating node. As a result, the other non-terminating node will come from the branch having residue $ - x_{\ell}\mod 2^{\ell}$.  Observe that $x_{\ell} < 2^{\ell} - x_{\ell}$ for all $\ell > 1 $, because $x_{\ell} < 2^{\ell - 1}$.

Hence, without loss of generality, let us evaluate the 2-adic valuation along the branch $2^{\ell}k + x_{\ell}$; we get
$$
\begin{array}{lll}
\nu_{2}((2^{\ell}n + x_{\ell})^{2} + 7)  
&=& \nu_{2}(2^{2\ell}n^{2} + 2^{\ell+1}nx_{\ell} + x^{2}_{\ell} + 7)  \\
&=& \nu_{2}(2^{\ell}) + \nu_{2}(2^{\ell}n^{2} + 2^{\ell}nx_{\ell} + \frac{x^{2}_{\ell} + 7}{2^{\ell}})\\
&=& \ell + \nu_{2}(2^{\ell}n^{2} + 2^{\ell}nx_{\ell} + \frac{x^{2}_{\ell} + 7}{2^{\ell}})\\
&=& \ell + \nu_{2}(2^{\ell}n^{2} + 2^{\ell}nx_{\ell} + y_{\ell}),
\end{array}
$$
where $y_{\ell} = \dfrac{x_{\ell}^2 + 7}{2^{\ell}}$.  There are two cases to consider, either $y_{\ell} = y_{c - 1}$ is even or odd.  

If $y_{\ell} = y_{c- 1}$ is even, then 
$$
\nu_{2}(2^{\ell}n^{2} + 2^{\ell}nx_{\ell} + y_{\ell}) \geq 1
$$
so that $\nu_{2}((2^{\ell}n + x_{\ell})^{2} + 7) \geq \ell + 1$; this means that the branch $2^{\ell}n + x_{\ell}$ leads to a non-terminating node and hence, by symmetry, the branch $2^{\ell}n + 2^{\ell} - x_{\ell}$ leads to the other non-terminating node.  Since $x_{\ell} < 2^{\ell} - x_{\ell}$, we have established that the minimum value of the branch residues occur at $x_{\ell} = x_{c - 1}$ in this case.

If $y_{\ell} = y_{c- 1}$ is odd, then 
\[\nu_{2}(2^{\ell}n^{2} + 2^{\ell}nx_{\ell} + y_{\ell}) = 0\]
so that $\nu_{2}((2^{\ell}k + x_{\ell})^{2} + 7) = \ell$; this means that the branch residue $x_{\ell}$ leads to a terminating node and hence, the branch residue $2^{\ell - 1} -x_{\ell}$ leads to a non-terminating node.  The other non-terminating node comes from the branch having residue $x_{\ell} + 2^{\ell -1}$.  Since $2^{\ell - 1} - x_{\ell} < x + 2^{\ell - 1}$ for all $\ell > 1$, it follows that the minimum value of the branch residues occur at $2^{\ell - 1} - x_{\ell} = 2^{\ell - 2} - x_{c - 1}$ in this case.    
 \end{proof}
 
 \begin{cor}\label{thm:recursion}
If we know a solution of $x^2+7=2^{c-1}y$ is $(x_{c-1},y_{c-1},c-1)$ then a solution to $x^2+7=2^cy$ can be found by using the following recursion:
\[
x_{c} = \left\{
\begin{array}{ll}
x_{c-1}, & y_{c-1} \mbox{  is even} \\
2^{c - 2} - x_{c - 1}, &  y_{c - 1}\mbox{  is odd}
\end{array}
\right.
\]
and
\[
y_{c} = \dfrac{x_{c}^2+ 7}{2^c}.
\]

Note that $y_{c}$ as defined above is always a positive integer. 
 \end{cor}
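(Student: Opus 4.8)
This is stated as a corollary of Proposition~\ref{prop:treerecursion}, and the two branches of the recursion are precisely the ``non-terminating node persists'' and ``pass to the reflected branch residue'' alternatives appearing in the proof of that proposition, with the split decided by the parity of $y_{c-1}=(x_{c-1}^2+7)/2^{c-1}$. Nonetheless, the statement as written is a direct verification, so the plan is simply to confirm, case by case, that $(x_c,y_c,c)$ satisfies $x_c^2+7=2^cy_c$ with $y_c\in\mathbb{Z}^+$.

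In the case $y_{c-1}$ even, write $y_{c-1}=2y_c$; then $x_{c-1}^2+7=2^{c-1}y_{c-1}=2^{c}y_c$, so taking $x_c=x_{c-1}$ gives $x_c^2+7=2^{c}y_c$ with $y_c=y_{c-1}/2\in\mathbb{Z}^+$.

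In the case $y_{c-1}$ odd, use $x_{c-1}^2+7=2^{c-1}y_{c-1}$ to expand
\[
(2^{c-2}-x_{c-1})^2+7 \;=\; 2^{2c-4}-2^{c-1}x_{c-1}+2^{c-1}y_{c-1} \;=\; 2^{c-1}\bigl(2^{c-3}+y_{c-1}-x_{c-1}\bigr),
\]
and then argue that the bracketed factor is even for every $c\ge4$: the term $2^{c-3}$ is even since $c-3\ge1$, $y_{c-1}$ is odd by hypothesis, and $x_{c-1}$ is necessarily odd because $\nu_2(x_{c-1}^2+7)=c-1\ge3$ rules out an even $x_{c-1}$. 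Hence $2^{c}\mid(2^{c-2}-x_{c-1})^2+7$, so taking $x_c=2^{c-2}-x_{c-1}$ gives $x_c^2+7=2^cy_c$ with $y_c=\tfrac12\bigl(2^{c-3}+y_{c-1}-x_{c-1}\bigr)$; since $x_c^2+7>0$, this $y_c$ is a positive integer. In either case $y_c=(x_c^2+7)/2^c\in\mathbb{Z}^+$, which is the closing remark.

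The step I expect to take the most care over is lining this up with Proposition~\ref{prop:treerecursion}, whose proof is phrased in terms of a tree level $\ell$ and valuation $c=\ell+1$ rather than the $c-1\mapsto c$ bookkeeping used here: one must confirm that the correct reflection is by $2^{c-2}$ (i.e.\ that it is the residue $2^{\ell-1}-x_\ell$ at the appropriate level), and that the parity test is on $y_{c-1}=(x_{c-1}^2+7)/2^{c-1}$ rather than on a nearby power of $2$. I would also note explicitly that the recursion is used for $c\ge4$ --- Proposition~\ref{prop:treerecursion} furnishes the tree-theoretic base case at $c=3$, for instance $(1,1,3)$ --- and that $y_c$ need not be odd (for example $(x_6,y_6,6)=(11,2,6)$), so $(x_c,y_c,c)$ solves $x^2+7=2^cy$ in the sense of the statement but is not necessarily a solution with $y$ odd.
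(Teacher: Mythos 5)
Your verification is correct, but it proceeds differently from the paper: the paper gives no separate argument for this corollary at all, presenting it as an immediate reading-off of Proposition~\ref{prop:treerecursion}, whose proof works at the level of the tree --- it computes $\nu_2((2^{\ell}n+x_{\ell})^2+7)=\ell+\nu_2(2^{\ell}n^2+2^{\ell}nx_{\ell}+y_{\ell})$, splits on the parity of $y_{\ell}$, and then locates the minimal non-terminating branch residue by appealing to the symmetry of the tree (in the odd case the reflected residue $2^{\ell-1}-x_{\ell}$ is non-terminating because $x_{\ell}$ terminates). Your route instead verifies the odd case head-on: expanding $(2^{c-2}-x_{c-1})^2+7=2^{c-1}(2^{c-3}+y_{c-1}-x_{c-1})$ and checking the bracket is even (using $c\ge 4$ and the forced oddness of $x_{c-1}$) replaces the symmetry appeal with an explicit two-line divisibility computation, which makes the corollary self-contained and independent of the tree formalism; what you lose relative to the paper's framing is the structural information that these are the \emph{minimal} branch residues at each level, which is part of what the proposition is actually asserting. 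Your bookkeeping concerns are well placed --- note in particular that the proposition's proof contains what appears to be an index slip ($2^{\ell-1}-x_{\ell}$ is written as $2^{\ell-2}-x_{c-1}$ rather than $2^{c-2}-x_{c-1}$), and your direct computation confirms that the corollary's $2^{c-2}$ is the correct reflection, consistent with Table~\ref{tab:recursion}. Your closing observations (recursion valid for $c\ge 4$ with base case $(1,1,3)$; $y_c$ need not be odd, e.g.\ $(11,2,6)$) are accurate and worth keeping.
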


\begin{thm}
The Diophantine equation $x^2+7=2^cy$ where $y$ is odd has solutions for all $c\geq 3$ and $c=0.$ Further, if we know solution $(x_{c-1},y_{c-1},c-1)$ for $x^2+7=2^{c-1}y$, we can find a solution for $x^2+7=2^cy$ by
\[x_{c} =2^{c - 2} - x_{c - 1}\]
because $y_{c-1}$ is assumed to be odd.

and
\[
y_{c} = \dfrac{x_{c}^2+ 7}{2^c}.
\]

The prior level of our tree corresponds with knowing a solution of our Diophantine equation $x^2+7=2^{c-1}y$. 

 \end{thm}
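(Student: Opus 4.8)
The plan is to regard this statement as a consolidation of two facts already in hand: the structural properties of the $2$-adic valuation tree of $x^2+7$ recalled from \cite{reuf2020} in Section~\ref{sec:tree} (the tree is infinite, symmetric, and has valuation range $\{0,3,4,5,\dots\}$) give the \emph{existence} of solutions, while Corollary~\ref{thm:recursion}, specialized to the odd case, gives the \emph{recursion}. Accordingly I would split the proof into an existence part and a recursion part, anchored at the base solution $(x_3,y_3,3)=(1,1,3)$ coming from $1^2+7=2^3$.

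\emph{Existence.} First I would dispose of the small values by elementary congruences. If $x$ is even then $x^2+7$ is odd, so for instance $(2,11,0)$ solves $x^2+7=2^cy$ with $y$ odd and $c=0$. If $x$ is odd then $x^2\equiv 1\pmod 8$, hence $x^2+7\equiv 0\pmod 8$ and $\nu_2(x^2+7)\geq 3$; this also explains in passing why $c=1$ and $c=2$ never occur. For $c\geq 3$ I would invoke the valuation-range property: since the tree attains the value $c$, there is a residue class of $x$ on which $\nu_2(x^2+7)=c$ exactly, and then $x^2+7=2^cy$ with $y=(x^2+7)/2^c$ odd.

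\emph{Recursion.} Here I would simply apply Corollary~\ref{thm:recursion} to the known solution $(x_{c-1},y_{c-1},c-1)$ with $c\geq 4$, in which $y_{c-1}$ is odd by hypothesis; the first branch of that corollary is then vacuous, leaving $x_c=2^{c-2}-x_{c-1}$ and $y_c=(x_c^2+7)/2^c$. For a self-contained check one computes, using $x_{c-1}^2+7=2^{c-1}y_{c-1}$,
\[
x_c^2+7=(2^{c-2}-x_{c-1})^2+7=2^{c-1}\bigl(2^{c-3}-x_{c-1}+y_{c-1}\bigr),
\]
and since $\nu_2(x_{c-1}^2+7)=c-1\geq 3$ forces $x_{c-1}$ odd while $y_{c-1}$ is odd and $2^{c-3}$ is even for $c\geq 4$, the parenthesized factor is even; hence $2^c\mid x_c^2+7$ and $y_c\in\ZZ^+$, so $(x_c,y_c,c)$ solves $x^2+7=2^cy$. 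The final sentence of the theorem — that the prior tree level corresponds to knowing a solution at $c-1$ — is the dictionary between tree levels and the index $c$ made precise in Proposition~\ref{prop:treerecursion}.

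The point that needs care, and which I would flag explicitly, is that this recursion does \emph{not} force $y_c$ to be odd again: iterating from $(1,1,3)$ gives $x_6=11$ with $y_6=2$, because $11^2+7=2^7$ ``overshoots'' level $6$ (one then falls back to the even-$y_{c-1}$ branch of Corollary~\ref{thm:recursion}, getting $x_7=11$, $y_7=1$). So the assertion that an \emph{odd}-$y$ solution exists at \emph{every} $c\geq 3$ genuinely rests on the valuation-range property of the tree and cannot be obtained by iterating the single formula $x_c=2^{c-2}-x_{c-1}$ alone; the recursion is the constructive device that walks through these solutions — and, as a byproduct, passes through the Ramanujan--Nagell values $x\in\{1,3,5,11,181\}$. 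I would therefore keep ``there is an odd-$y$ solution for each such $c$'' logically separate from ``this recursion produces solutions.''
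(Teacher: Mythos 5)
Your proposal is correct and follows essentially the same route as the paper: existence of an odd-$y$ solution for each $c\in\{0\}\cup\ZZ_{\geq 3}$ is derived from the valuation range of the tree established in \cite{reuf2020}, and the recursive formula is Corollary~\ref{thm:recursion} restricted to the odd-$y_{c-1}$ branch, anchored at $(1,1,3)$. You are in fact more careful than the paper on one point worth keeping: since $y_c=(x_c^2+7)/2^c$ need not be odd again (e.g.\ $y_6=2$ in Table~\ref{tab:recursion}), the single formula $x_c=2^{c-2}-x_{c-1}$ cannot simply be iterated, so the ``solutions exist for all $c\geq 3$'' claim must rest on the tree's valuation range, exactly as you separate it.
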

 
 \begin{proof}
We know by \cite{bhvlucas} that there are solutions to our equations if $c=1,2,3$, and those solutions are  $c=3,\ y=1$ or $c=2,\ y=2,$ or $c=1,\ y=4$ to this Diophantine equation when $x=1.$ If we further restrict $y$ to be odd, then we find that the $c=1$ and $c=2$ cases are actually a solution for $c=3$; $(x,y,c)=(1,1,3)$.

If $c=0$ then all solutions are of the form $(2k,(2k)^2+7,0)$ because we see from our tree that any even number plugged in for $x$ will always be odd and hence $c=0.$ To find $y$, we plug in our $x$ and $c$ value into our Diophantine equation and solve for $y.$ So
\[y=(2k)^2+7.\]
 
So now we know that for $y$ odd, $c\geq 3$  or $c=0$ have solutions. By \cite{reuf2020}, we know our valuation tree has range $\ZZ_{\geq 3}\cup \{0\}$, so we know we have a solution $x$ to $\nu_2(x^2+7)=c$ for all $c\geq 3$ and $c=0$. 
Further by Proposition~\ref{thm:recursion} we can recursively determine exactly the form $x$ should take for each $c$ value. See Table~\ref{tab:recursion} for some example solutions.
 \end{proof}

Notice that all of these solutions are in agreement with Bennett, Filaseta, and Trifonov \cite[Theorem 1.1]{bennett2008yet} because if $x$, $c$ and $y$ are positive integers satisfying \eqref{equationfamily}, then either $x$ is in the set of $1, 3, 5, 11, 181$ or $y > \sqrt{x}$. 

\section{Proof of Theorem~\ref{main theorem}}\label{sec:proof}

Medina, Moll, and Rowland \cite[Theorem 2.1]{victor2017} have proven that a polynomial with roots in $\ZZ_2,$ the $2$-adic integers, will form infinite valuation trees. They focus on the sequence $x^2+D$ in \cite[Lemma 3.8 and Theorem 4.5]{almodovaraclosed}, where they are able to describe the forms $D$ must take in order for the tree to be infinite. In particular if $D\equiv 7 \mod 8$, then the tree is infinite, or if $D\equiv 4\mod 8$ and $\nu_2(x_1^2+D)$ can be rewritten as $\nu_2(x_2^2+7)$, then the tree will be infinite. This is further expanded upon \cite[Theorem 1 part 3]{Olena2020} to general quadratic polynomials of the form $ax^2+bx+c.$ The authors specify that our $D$ should be of the form $2^{2k}(8j+7)$ if we wish to have infinite trees for some $k,j\in\ZZ_{\geq 0}$. Here we use these results to describe which Diophantine equations of the form $x^2+D=2^cy,$ $y$ odd, will have infinitely many nontrivial solutions for $c$.

\begin{proof}[Proof of Theorem~\ref{main theorem}]
If $D\neq 2^{2k}(8j+7),$ then we know the tree is bounded by \cite{almodovaraclosed} and \cite{Olena2020}. That means that the range of values $c$ can take is finite, and so there exists a $k$ such that for $c>k,$ $x^2+D=2^cy,$ $y$ odd has no nontrivial solutions. This is because no valuation branch exists with the value $c,$ hence $\nu_{2}(x^2+D)=c$ does not exist for $c>k.$ 

We know from Theorem 1 part 3b in ~\cite{Olena2020} that if $D=2^{2k}(8j+7)$, then that our tree will be infinite, that is, the range of $c$ will be infinite as well. Starting at $c>k,$ for some finite valuation of the tree $k$, there will be a valuation for every level of the tree. This means that our Diophantine equation $x^2+D=2^cy,$ $y$ odd, will have solutions for all $c$ in the range of valuations on the tree. 

Note that if $D\equiv 7\mod 8$ then $D=8j+7.$ We know that plugging an even number into $x^2+D$ will get us $\nu_2(x^2+D)=0$. Now, suppose $x=2n+1$, we then get  
\begin{align*}
\nu_2(x^2+D)&= \nu_2(x^2+8j+7)\\
&= \nu_2((2n+1)^2+8j+7)\\
&=\nu_2(2^2n^2+2^2n+8j+8)\\
&=\nu_2(2^3(\frac{n^2+n}{2}+j+1))\\
&=\nu_2(2^3)+\nu_2(\frac{n^2+n}{2}+j+1)) \geq 3\\
\end{align*}
Since $n^2+n$ is even for all $n$, we can divide out another $2$. So now we know from this work that if $D\equiv 7\mod 8$, $c$ will never be the values $1$ and $2$. 
Further, it is shown in \cite[Theorem 4.4]{almodovaraclosed} that if there is valuation $c$ on a terminating node of tree, then there will be valuation $c+1$ on the next level as a terminating node. So the range of $c$ values in the solution of $x^2+D=2^cy,$ $y$ odd and $D\equiv 7\mod 8$ is at most $\{0,3,4,5,\hdots\}.$ 
Using a similar proof for $D=2^{2k}(8j+7),$ $k\geq 1$ we can show that $c\neq 1.$
\end{proof}

\section{Cubic Diophantine Equation and finding infinite trees}\label{sec:cubic}

In order to understand the Diophantine results for $x^3+D=2^cy$, $y$ odd, we again put together results on when the $2$-adic valuation tree are finite and infinite. Based on reliable communications, these next two propositions have been proven for general prime $p$ by Victor Moll, Vaishavi Sharma, and Diego Villamizar. They use cubic reciprocity as well as exploring the roots of the polynomials in $\ZZ_p$. However, a preprint containing their results is not available as of this writing. Thus, here we've chosen to include a proof of the special case when $p=2$. 
\begin{prop}\label{prop:deg3finite}
If $D = 2^{3k}(2j+1)$ for $k, j \geq 0$ then the valuation tree of $x^3 + D$ 
is infinite. 
\end{prop}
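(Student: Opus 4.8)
The plan is to exhibit a root of $f(x) = x^3 + D$ in $\ZZ_2$; by the theorem of Medina, Moll, and Rowland \cite[Theorem 2.1]{victor2017} cited above, a polynomial with a $2$-adic root has an infinite valuation tree, so producing such a root suffices.

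Step 1 is to reduce to an odd constant. Write $-D = -2^{3k}(2j+1) = (2^k)^3 \cdot \bigl(-(2j+1)\bigr)$. Since $-(2j+1)$ is an odd $2$-adic integer, it is enough to know that every odd element of $\ZZ_2$ is a cube in $\ZZ_2$: if $-(2j+1) = \beta^3$ with $\beta \in \ZZ_2$, then $-D = (2^k\beta)^3$ and $f(2^k\beta) = 0$, giving the desired $2$-adic root. Step 2 is to establish that claim. I would prove it with Hensel's lemma applied to $g(x) = x^3 - u$ for $u \in \ZZ_2$ odd: $g(1) = 1 - u \equiv 0 \pmod 2$ and $g'(1) = 3 \equiv 1 \pmod 2$, so $g$ has a (unique) root $\equiv 1 \pmod 2$ in $\ZZ_2$. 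Structurally, this is because $\ZZ_2^\times \cong \ZZ/2 \times \ZZ_2$ and multiplication by $3$ is invertible on each factor, since $2 \equiv 2 \pmod 3$; this is exactly why the $p = 2$ case is unconditional, carrying no cubic-reciprocity hypothesis on $D$. Combining Steps 1 and 2 with \cite{victor2017} completes the proof.

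Alternatively — and perhaps more in keeping with the explicit flavor of Section~\ref{sec:tree} — one can make the infinite branch visible: let $\alpha \in \ZZ_2$ be the cube root of $-D$ just constructed, and for each level $\ell$ let $a_\ell$ be the representative of $\alpha \bmod 2^\ell$. Cubing preserves congruence modulo $2^\ell$, so $a_\ell^3 \equiv \alpha^3 = -D \pmod{2^\ell}$ and hence $\nu_2(a_\ell^3 + D) \geq \ell$; since $f(x) = x^3 + D$ is constant modulo $2^\ell$ on the class $x \equiv a_\ell \pmod{2^\ell}$, the node for this class at level $\ell$ cannot be terminating. As $\ell$ ranges over all positive integers this produces a node at every level, so the tree is infinite.

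The one genuine ingredient is Step 2, the surjectivity of the cubing map on $\ZZ_2^\times$ (equivalently, solvability of $x^3 \equiv (\text{odd}) \pmod{2^\ell}$ for all $\ell$); everything else is bookkeeping — how a $2$-adic root produces non-terminating branches, and how the factor $2^{3k}$ merely shifts all valuations by $3k$. I would also check the degenerate cases $k = 0$ or $j = 0$ directly (for instance $D = 1$ with root $x = -1$, and $D = 8$ with root $x = -2$) and confirm that the terminating/non-terminating convention of Section~\ref{sec:tree} does declare a level-$\ell$ node of valuation $\geq \ell$ non-terminating, which follows from $f(x + 2^\ell t) \equiv f(x)\pmod{2^\ell}$ for cubic $f$.
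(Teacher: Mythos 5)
Your proposal is correct and follows essentially the same route as the paper: factor out the perfect cube $2^{3k}$, use Hensel's lemma to extract a cube root of the remaining odd unit in $\ZZ_2$, and invoke \cite[Theorem 2.1]{victor2017} to conclude the tree is infinite. Your version is in fact more careful than the paper's, since you verify the Hensel hypothesis explicitly ($g'(1)=3$ is a $2$-adic unit) and spell out why a $2$-adic root forces a non-terminating node at every level.
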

\begin{proof}
When $k = 0$, this is easy to see as $D=2j+1$ and every integer is a cubic modulo $2$. So when D is odd, by Hensel's lemma we can see that the cube root of D is a root of the polynomial in $\ZZ_{2}$ and therefore we have an infinite branch~\cite[Theorem 2.1]{victor2017}.   
 When $k>0$ we see that the cube root of D is $2^k(2j+1)^{1/3}$ that is, $8^k$ is a perfect cube and $2j+1$ is an odd integer, so by Hensel's lemma, we have that the cube root of D is a root of the polynomial in $\ZZ_{2}$ and therefore we have an infinite branch. 
\end{proof}

Here we work out the cases of $D$ that would result in finite branches. 

\begin{prop}\label{prop:finitedeg3}
Let $D$ be a positive integer. 
If $D \not\in \{2^{3k}(2j+1): k, j \geq 0\}$ then
\begin{enumerate}
    \item 
If $D = 2(2j+1)$ then 

$$
\nu_{2}(x^3 + D)=\left\{
\begin{array}{cc}
1, & x \equiv 0 \mod 2\\
0, & x \equiv 1 \mod 2. 
\end{array}
\right.
$$

\item If $D= 2^2(2j+1)$ then
$$
\nu_{2}(x^3 + D)=\left\{
\begin{array}{cc}
2, & x \equiv 0 \mod 2\\
0, & x \equiv 1 \mod 2. 
\end{array}
\right.
$$

\item If $D = 2^{3k + i} $ for $k > 0$ and $i=1\text{ or } 2$ then
$$
\nu_{2}(x^3 + D) = \begin{cases}
0 & \text{ if }x \text{ is odd}\\
3 &  \text{ if }x\equiv 2\mod 4 \\
3\ell  & \text{ if }x \equiv 2^\ell\mod 2^{\ell+1}\\
\vdots \\
3k & \text{ if }x\equiv 2^k\mod 2^{k+1} \\
3k+i & \text{ if }x\equiv 0\mod 2^{k+1} 
\end{cases}
$$
where $\ell = 2, ..., k$.
\end{enumerate}
Hence, the valuation tree is finite. 
\end{prop}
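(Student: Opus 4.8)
The plan is to reduce the entire statement to one arithmetic observation: $\nu_2(x^3)=3\nu_2(x)$ is always a multiple of $3$, whereas, under the hypothesis $D\notin\{2^{3k}(2j+1):k,j\ge 0\}$, the exponent $\nu_2(D)$ is \emph{not} a multiple of $3$. Concretely, I would first write $D=2^{\nu_2(D)}m$ with $m$ odd; the hypothesis says $\nu_2(D)\equiv 1$ or $2\pmod 3$, so there are unique $k\ge 0$ and $i\in\{1,2\}$ with $\nu_2(D)=3k+i$ (this gives $k=0$, $i=1$ in part (1), $k=0$, $i=2$ in part (2), and $k>0$ in part (3)). Then I would split on $\nu_2(x)$: either $x=0$, or $x=2^a u$ with $u$ odd and $a=\nu_2(x)\ge 0$, so that $\nu_2(x^3+D)=\nu_2\!\left(2^{3a}u^3+2^{3k+i}m\right)$. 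Since $3a\ne 3k+i$ for every integer $a$, the ultrametric equality $\nu_2(\alpha+\beta)=\min(\nu_2\alpha,\nu_2\beta)$ when $\nu_2\alpha\ne\nu_2\beta$ applies with no exceptional ``carry'' case, giving $\nu_2(x^3+D)=\min(3a,\,3k+i)$ exactly (and $=3k+i$ when $x=0$). I would state that ultrametric fact as a one-line lemma, or simply cite it, before the case split.

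Next I would translate $\min(3a,3k+i)$ into congruence conditions on $x$. Because $0\le i\le 2$, one has $3a<3k+i\iff a\le k$ and $3a>3k+i\iff a\ge k+1$. Hence $\nu_2(x^3+D)=3a=3\nu_2(x)$ whenever $\nu_2(x)\le k$, i.e.\ whenever $x\equiv 2^a\bmod 2^{a+1}$ for some $a\le k$, and $\nu_2(x^3+D)=3k+i$ whenever $\nu_2(x)\ge k+1$, i.e.\ whenever $x\equiv 0\bmod 2^{k+1}$ (the case $x=0$ included). Specializing $k=0$: every even $x$ has $\nu_2(x)\ge 1=k+1$, so $\nu_2(x^3+D)=i$, while odd $x$ gives $\nu_2(x^3+D)=0$; this is exactly parts (1) and (2). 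Specializing $k\ge 1$ and reading off $a=0$ (odd $x$), $a=1$ ($x\equiv 2\bmod 4$), $a=2,\dots,k$, and $a\ge k+1$ ($x\equiv 0\bmod 2^{k+1}$) reproduces the displayed table of part (3).

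For the closing clause ``hence the valuation tree is finite,'' I would observe that $\nu_2(x^3+D)$ depends only on the residue of $x$ modulo $2^{k+1}$: the classes $x\equiv 2^a\bmod 2^{a+1}$ for $a=0,\dots,k$ together with $x\equiv 0\bmod 2^{k+1}$ partition $\ZZ/2^{k+1}\ZZ$, and the valuation is constant on each. Therefore every node at level $k+1$ of the $2$-adic valuation tree is terminating, the tree has finite depth, and its valuation range is the finite set $\{0,3,6,\dots,3k\}\cup\{3k+i\}$.

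I do not expect a genuine obstacle here; the content that makes the classification sharp (that $D$ \emph{not} of the form $2^{3k}(2j+1)$ forces finiteness) is already packaged into the hypothesis via $3\nmid\nu_2(D)$, and the rest is bookkeeping. The only place to be careful is the correspondence between ``$\nu_2(x)=a$'' and ``$x\equiv 2^a\bmod 2^{a+1}$'' and the boundary cases $a=k$, $a=k+1$, and $x=0$; making the ultrametric equality explicit up front keeps these consistent.
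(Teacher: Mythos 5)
Your proof is correct, and it takes a genuinely different and cleaner route than the paper's. The paper proves parts (1) and (2) by the same direct parity computations you recover as the $k=0$ specializations, but for part (3) it builds the valuation tree level by level: it expands $(2^{\ell}n)^3+2^{3k+i}$ and $(2^{\ell}n+2^{\ell-1})^3+2^{3k+i}$ binomially at each level, identifies which branch terminates and which continues, and closes with an induction on $k$. You instead compress everything into the single ultrametric observation that $\nu_2(x^3)=3\nu_2(x)$ is always a multiple of $3$ while the hypothesis forces $3\nmid\nu_2(D)$, so $\nu_2(x^3+D)=\min\bigl(3\nu_2(x),\nu_2(D)\bigr)$ with no carry case ever arising; the congruence table and the finiteness of the tree (termination at level $k+1$) then fall out by bookkeeping. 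Your approach buys two things: brevity, and strictly greater generality --- by carrying the odd cofactor $m$ through, you prove the result for all $D=2^{3k+i}(2j+1)$ with $k>0$, whereas the proposition as stated (and the paper's proof) only treats the pure prime power $D=2^{3k+i}$ in case (3), silently omitting, e.g., $D=48$. What the paper's approach buys is consistency with its expository aim: the level-by-level expansion exhibits the actual shape of the finite tree (which branch residues terminate at which level), which is the visual object the paper is advertising, whereas your argument determines the valuation function without ever drawing the tree. The one point you flagged as needing care --- the equivalence of $\nu_2(x)=a$ with $x\equiv 2^a\pmod{2^{a+1}}$ and the boundary at $a=k$ versus $a\ge k+1$ --- is handled correctly, since $3a<3k+i$ iff $a\le k$ precisely because $i\in\{1,2\}$.
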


\begin{proof}
\textit{(1).}  Suppose $D = 2(2j + 1)$ for some $j \geq 0$.  If $x = 2n$ then
$$
\nu_{2}((2n)^3 + 4j + 2) = \nu_{2}(2) + \nu_{2}(4n^3 + 2j + 1) = 1.
$$
If $x = 2n + 1$ then
$$
\nu_{2}((2n + 1)^3 + 4j + 2) = 0, $$
since $(2n + 1)^3 + 4j + 2$ is odd for all $j$.

\textit{(2).} Suppose $D = 2^2(2j + 1)$  for some $j \geq 0$.  If $x = 2n$ then
$$
\nu_{2}((2n)^3 + 8j + 4) = \nu_{2}(2^2) + \nu_{2}(2n^3 + 2j + 1) = 2.
$$
If $x = 2n + 1$ then
$$
\nu_{2}((2n + 1)^3 + 8j + 4) = 0, $$
since $(2n + 1)^3 + 8j + 4$ is odd for all $n$.

\textit{(3)} Suppose $D = 2^{3k+i}$ for $k > 0$ and $i\in\{1,2\}$ then if $x = 2n+1$ we have
$$
\nu_{2}((2n+1)^3 + 2^{3k+i}) = 0,
$$
while if $x = 2n$ then
$$
\nu_{2}((2n)^3 + 2^{3k+i}) = \nu_{2}(2^3) + \nu_{2}(n^3 + 2^{(3k+i)-3}) \geq 3.
$$
Since this valuation is not constant, we have to look at two cases, $i = 1$ and $i = 2$.

Suppose $i = 1$.  We will see that this case results in finite valuation trees that have different structures. The proof for $i = 2$ is analogous.

Diagramming the valuation tree, the first level of the tree in this case has two branches yielding a terminating valuation node value 0 (with residue 1) and a non-terminating node value 3 (with residue 0).  Moreover, the exponent $(3k+1)-3 = 3k-2$ may only take values from the set $\{1, 4, 7, ...\} = \{3j+1\}_{j=0}^{\infty}$ since $k$ is a natural number. 

To create the next level of the tree, consider $x = 2^2n$ and $x=2^2n + 2$:
\begin{equation}\label{3i-2}
\nu_{2}((2^2n)^3+2^{3k+1})=\nu_{2}(2^{3})+\nu_{2}(2^3n^3 + 2^{3k-2})\geq 3,
\end{equation}
$$
\nu_{2}((2^2n + 2)^3 +2^{3k+1})=\nu_{2}(2^{3})+\nu_{2}(2^3n^3 + 3\cdot 2^2 n^2 + 3 \cdot 2 n + 1 + 2^{3k-2})=3,
$$
since $3k-2>0$.  

Observe that the second equation in (\ref{3i-2}) gives a constant node value.  Let us now look at the first equation in (\ref{3i-2}).
If $3k-2 = 1$ then (\ref{3i-2}) becomes 
$$
\nu_{2}((2^2n)^3+2^{3k+1})=\nu_{2}(2^{3})+\nu_{2}(2^3n^3 + 2^1)= 4,
$$
and in this case, the valuation tree terminates:
 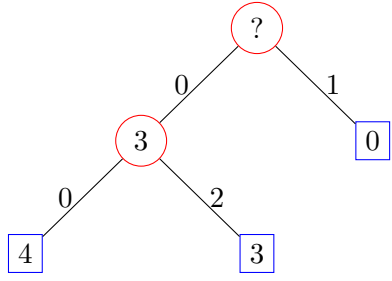
\begin{figure}[H]
\begin{tikzpicture}[sibling distance=8em,
]]
\tikzstyle{level 3}=[level distance=15mm,sibling distance=1.5cm]
\tikzstyle{level 4}=[level distance=15mm,sibling distance=1.2cm]
\tikzset{circle node/.style={shape=circle,draw=red,rounded corners,
    draw, align=center,
    top color=white, bottom color=white},square node/.style={shape=rectangle,draw=blue,
    draw, align=center,
    top color=white, bottom color=white}}
  \node[circle node] {$?$}
     child {node[circle node] {$3$}
        child { node[square node]  {$4$}  edge from parent node[left]{$0$} }
        child { node[square node]  {3} edge from parent node[right]{$2$} }
     edge from parent node[left]{$0$} 
     }
     child { node[square node] {$0$}
      edge from parent node[right]{$1$} 
    } 
        ;
\end{tikzpicture}
\caption{The finite valuation tree for $x^3+2^4.$}
\label{fig:cubeplus2}
\end{figure}

We see here that when $3k - 2=1$ then the 2-adic valuation tree terminates at the second level (with second-level branch residues $2^2n, 2^2 n + 2$) and having exact valuation nodes $0, 3, 4$.

If $3k-2 = 4$ then the first equation of (\ref{3i-2}) becomes
$$
\nu_{2}((2^2n)^3+2^{3k+1})= \nu_{2}(2^{3})+\nu_{2}(2^3n^3 + 2^{4}) = \nu_{2}(2^{3})+\nu_{2}(2^3) + \nu_{2}(n^3 + 2)\geq 6,
$$
which then requires that we move on to the next level of the tree.

Consider $x=2^3n$ and $x=2^3n + 4$:
\begin{equation}\label{3i-5}
\nu_{2}((2^3n)^3+2^{3k+1})=\nu_{2}(2^{6})+\nu_{2}(2^3n^3 + 2^{3k-5})\geq 6,
\end{equation}
$$
\nu_{2}((2^3n + 4)^3 +2^{3k+1})=\nu_{2}(2^{6})+\nu_{2}(2^3n^3 + 3\cdot 2^2 n^2 + 3 \cdot 2  + 1 + 2^{3k-5})=6,
$$
since $3k-5>0$. 
Observe that the second equation in (\ref{3i-5}) gives a constant node value. Now we look at the first equation in (\ref{3i-5}).

If $3k-5=1$ then the first equation of (\ref{3i-5}) becomes
$$
\nu_{2}((2^3n)^3+2^{3k+1})=\nu_{2}(2^{6})+\nu_{2}(2^3n^3 + 2^{1}) = 7,
$$
and in this case, the valuation tree terminates as shown in Figure~\ref{fig:cubeplus4}.

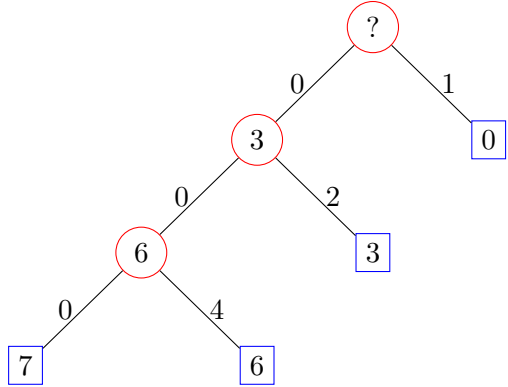
\begin{figure}[H]
\begin{tikzpicture}[sibling distance=8em,
]]
\tikzstyle{level 4}=[level distance=15mm,sibling distance=1.2cm]
\tikzset{circle node/.style={shape=circle,draw=red,rounded corners,
    draw, align=center,
    top color=white, bottom color=white},square node/.style={shape=rectangle,draw=blue,
    draw, align=center,
    top color=white, bottom color=white}}
  \node[circle node] {$?$}
     child {node[circle node] {$3$}
        child { node[circle node]  {$6$}  
        child{node[square node] {$7$}edge from parent node[left]{$0$} }
        child{node[square node] {$6$}edge from parent node[right]{$4$} } edge from parent node[left]{$0$} }
        child { node[square node]  {3} edge from parent node[right]{$2$} }
     edge from parent node[left]{$0$} 
     }
     child { node[square node] {$0$}
      edge from parent node[right]{$1$} 
    } 
        ;
\end{tikzpicture}
\caption{The finite valuation tree for $x^3+ 2^7.$}
\label{fig:cubeplus4}
\end{figure}

We see here that when $3k-5=1$ then the 2-adic valuation tree terminates at the third level (with third-level branch residues $2^3n, 2^3 n + 2^2$) and has exact valuation nodes $0, 3, 6, 7.$

The conclusion follows by induction on $k$.  This ends the proof for the case $i = 1$. The proof for $i = 2$ is analogous.

 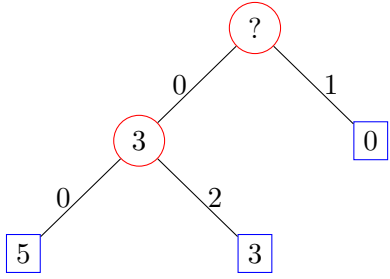
\begin{figure}[H]
 \begin{tikzpicture}[sibling distance=8em,
 ]]
 \tikzstyle{level 3}=[level distance=15mm,sibling distance=1.5cm]
 \tikzstyle{level 4}=[level distance=15mm,sibling distance=1.2cm]
 \tikzset{circle node/.style={shape=circle,draw=red,rounded corners,
     draw, align=center,
     top color=white, bottom color=white},square node/.style={shape=rectangle,draw=blue,
     draw, align=center,
     top color=white, bottom color=white}}
   \node[circle node] {$?$}
      child {node[circle node] {$3$}
         child { node[square node]  {$5$}  edge from parent node[left]{$0$} }
         child { node[square node]  {3} edge from parent node[right]{$2$} }
      edge from parent node[left]{$0$} 
      }
      child { node[square node] {$0$}
       edge from parent node[right]{$1$} 
     } 
         ;
 \end{tikzpicture}
 \caption{The finite valuation tree for $x^3+ 2^5$.}
 \label{fig:cubeplusi}
\end{figure}

\end{proof}

Now, we apply Propositions ~\ref{prop:deg3finite} and~\ref{prop:finitedeg3} to prove Theorem \ref{mainthmdeg3}.

\begin{proof}[Proof of Theorem ~\ref{mainthmdeg3}]
 Using the same techniques as our proof of Theorem~\ref{main theorem}  where we relate the valuations on the tree to our Diophantine equation we can show that for $D=2^{3k}(2j+1)$, $x^3+D=2^cy,$ $y$ odd, has a solution for all $c\geq 0$ except for finitely many values. This is because Proposition~\ref{prop:deg3finite} says the valuation trees are infinite. 

If $D\neq 2^{3k}(2j+1)$ we can see from Proposition~\ref{prop:finitedeg3} that the range of values of $c$ depends on $D = 2(2j+1)$, $D=2^2(2j+1)$ or $D=2^{3k+i},$ $i\in\{1,2\}$ and $k\in\mathbb{Z}^+$.  We know that there are finitely many $c$'s where $x^3+D=2^cy,$ $y$ odd, will have solutions. 

Note that we can do similar calculations like we did in the proof of Theorem~\ref{main theorem} to show that for $D=8^k(2j+1)$, $k\geq 1$ that $\nu_2(x^3+D)\geq 3.$ So our potential $c$ valuations in this case are at most $\{0,3,4,\hdots\}$. And if $k=0$ then we could at most have solutions for all $c\in\ZZ_{\geq 0}.$
\end{proof}

\section{Examples of using Valuation Trees to solve $x^2+D=2^cy$ and $x^3+D=2^cy$ for specific $D$}\label{sec:examples}
In \cite{reuf2020}, it was shown the exact forms of the valuation trees for $D=1,\ 2,\ 3$ and $4$. Using these trees we can find all the nontrivial solutions to the Diophantine equation $x^2+D=2^cy$ for $D=1,\ 2,\ 3$ and $4.$ Here we show what solutions for the quadratic Diophantine equation would be for $D=1,\ 3,$ and $4.$
\begin{thm}
The equation $x^2 + 1 = 2^c y$ has no solutions when $c \neq 0, 1$.  If $c=0$, $y$ is odd whenever $x$ is even.  If $c = 1$, $y$ is even whenever $x$ is odd. 

The solutions to $x^2+1=2^cy,$ where $2\nmid y$ are of the form $x=0\mod 2$ and $x=1\mod 2$ and $c=0$ or $1$, respectively. In this case, all solution $(x,y,c)$ are $(0\mod 2,x^2+1 ,0)$ and $(1\mod 2,\frac{x^2+1}{2},1).$
\end{thm}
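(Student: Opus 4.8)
The plan is to follow the same template as the analysis of $x^2+7$ in Section~\ref{sec:tree}, but observe that for $D=1$ the $2$-adic valuation tree terminates already at level~$1$, so almost all of the bookkeeping disappears. Everything reduces to the single fact that $\nu_2(x^2+1)\in\{0,1\}$ for every integer $x$, which I would prove by splitting on the parity of $x$. If $x$ is even, then $x^2+1$ is odd and $\nu_2(x^2+1)=0$. If $x=2n+1$ is odd, then $x^2+1=4n^2+4n+2=2(2n^2+2n+1)$, and since $n(n+1)$ is even, the factor $2n^2+2n+1$ is odd, so $\nu_2(x^2+1)=1$. Equivalently, the level-$1$ tree of $x^2+1$ recorded in \cite{reuf2020} consists of two terminating nodes, with valuation $0$ on the even branch and $1$ on the odd branch, and no further branching ever occurs.

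From there I would argue exactly as we did for $x^2+7$: if $(x,y,c)$ solves $x^2+1=2^cy$ with $2\nmid y$, then $c=\nu_2(x^2+1)$, so the previous paragraph forces $c\in\{0,1\}$ and excludes every other value of $c$; this gives the ``no solutions when $c\neq 0,1$'' part. Conversely, both admissible values are realized. For $c=0$ we need $\nu_2(x^2+1)=0$, i.e. $x\equiv 0\bmod 2$, and then $y=x^2+1$; for $c=1$ we need $\nu_2(x^2+1)=1$, i.e. $x\equiv 1\bmod 2$, and then $y=\tfrac{x^2+1}{2}$, which equals $2n^2+2n+1$ when $x=2n+1$. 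This pins down the full solution set as the two families $\bigl(x,\,x^2+1,\,0\bigr)$ with $x$ even and $\bigl(x,\,\tfrac{x^2+1}{2},\,1\bigr)$ with $x$ odd, and the stated parity assertions for $y$ in each regime are read off directly from the two factorizations above.

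I do not expect any genuine obstacle here: the only substantive content is the elementary parity fact that $n(n+1)$ is always even, which is what makes $2n^2+2n+1$ odd. The points requiring a little care are cosmetic — phrasing the conclusion in terms of the residue classes of $x$ modulo $2$ rather than individual integers, and keeping the two expressions for $y$ straight (namely $y=x^2+1$ in the $c=0$ case and $y=\tfrac{x^2+1}{2}$ in the $c=1$ case), so that the description of the solution triples matches the valuation computation.
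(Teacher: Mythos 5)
Your proposal is correct and follows essentially the same route as the paper: both reduce the theorem to the fact that $\nu_2(x^2+1)$ is $0$ on the even branch and $1$ on the odd branch, then translate each valuation into the corresponding equation $x^2+1=2^cy$ and solve for $y$. The only difference is that you verify the two-node valuation tree directly by the parity computation $x^2+1=2(2n^2+2n+1)$, whereas the paper simply quotes the finite tree from its reference; this is a harmless (indeed welcome) filling-in of a cited fact, not a different argument.
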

 \begin{figure}[H]
\begin{tikzpicture}[sibling distance=8em,
]]
\tikzset{circle node/.style={shape=circle,draw=red,rounded corners,
    draw, align=center,
    top color=white, bottom color=white},square node/.style={shape=rectangle,draw=blue,
    draw, align=center,
    top color=white, bottom color=white}}
  \node[circle node] {$?$}
     child {node[square node] {0}edge from parent node[left]{$0$} }
     child { node[square node] {$1$} 
          edge from parent node[right]{$1$}
        }
        ;
\end{tikzpicture}
\caption{The finite valuation tree for $x^2+1$}
\label{fig:plus1}
\end{figure}
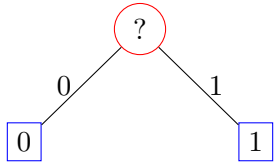
\begin{proof}
From our finite tree in Figure~\ref{fig:plus1} we get that the $2$-adic valuations of our sequence $x^2+1$ can only ever be
$$
\nu_{2}(n^2 + 1) = \left\{\begin{array}{cc}
0, & n\equiv 0 \mod 2  \\
1, & n \equiv 1 \mod 2. 
\end{array}
\right.
$$
Then we translate our valuation equations to get $x^2+1=2^0y$ and $x^2+1=2^1y$, which we know must have solutions since our tree stated that there were $x$ values that we could plug in to get exact valuation $0$ or $1$. 

Now we need to solve for $y$, which gives us
\[y=x^2+1,\]
where $x$ is even and
\[y=\frac{x^2+1}{2},\]
where $x$ is odd. 

There are no other non-trivial $c$ values that will be solutions to this Diophantine equation because there are no valuations greater than $1$ in our valuation tree. 

Since the tree is finite we know that any other classifications of our $x$ value modulo $2^c$ will only result in one of the two stated valuations. 
\end{proof}

\begin{thm}
The equation $x^2 + 3 = 2^c y$ has no solutions when $c \neq 0, 2$.  If $c=0$, $y$ is odd whenever $x$ is even.  If $c = 2$, $y$ is even whenever $x$ is odd.  
\end{thm}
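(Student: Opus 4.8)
The plan is to run the same argument used for the $x^2+1$ theorem, replacing its finite tree (Figure~\ref{fig:plus1}) by the analogous finite $2$-adic valuation tree for $x^2+3$, and then to read both the solvability claim and the two parity claims off that tree. First I would compute the first level directly. For $x=2n$ even, $x^2+3=4n^2+3$ is odd, so the branch terminates at the node $0$. For $x=2n+1$ odd, $x^2+3$ is even and in fact divisible by $4$ (an odd square is $\equiv 1 \pmod 4$, so $x^2+3\equiv 0 \pmod 4$), giving $\nu_2(x^2+3)\geq 2$ on this branch.

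Next I would establish that the odd branch does not subdivide further, so that the tree is finite with valuation range exactly $\{0,2\}$. Finiteness is already guaranteed by Theorem~\ref{main theorem}(a): since $3$ is not of the form $2^{2k}(8j+7)$ for any $k,j\in\ZZ_{\geq 0}$, the tree for $x^2+3$ is bounded. A direct check of the odd branch then identifies its single terminating node as $2$, so $\nu_2(x^2+3)\in\{0,2\}$ for every integer $x$, exactly as in the two-leaf picture of Figure~\ref{fig:plus1}. Translating this back to the Diophantine equation as in Section~\ref{sec:tree}, the equation $x^2+3=2^c y$ with $y$ odd has no solution whenever $c\notin\{0,2\}$, because no $x$ realizes the valuation $c$; this proves the first sentence of the statement.

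It then remains to solve for $y$ in the two admissible cases and record its parity as claimed. For $c=0$ with $x$ even, the equation gives $y=x^2+3$, which is odd since $x^2$ is even, establishing the first parity assertion exactly as in the $x^2+1$ theorem. For $c=2$ with $x$ odd, the divisibility $4\mid x^2+3$ lets us solve $y=(x^2+3)/4$, and I would reduce this quotient modulo $2$ to obtain the parity of $y$ asserted in the statement. Because the tree is finite, every residue class of $x$ modulo a higher power of $2$ falls into one of these two branches, so no further cases arise and every solution is accounted for.

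The only step that is not a mechanical reading of a two-node tree is the parity bookkeeping in the $c=2$ branch. I would therefore treat the reduction of $y=(x^2+3)/4$ modulo $2$ for odd $x$ as the crux of the argument, since it is the single arithmetic fact on which the stated parity of $y$ depends; the remaining assertions follow directly from the finiteness of the tree and the valuation range $\{0,2\}$.
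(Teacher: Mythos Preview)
Your approach matches the paper's: compute the two-leaf valuation tree (even branch terminates at $0$, odd branch at $2$), read off the range $\{0,2\}$, and in each admissible case solve for $y$. The paper argues directly from Figure~\ref{fig:plus3}, recording the solution families $(2n,(2n)^2+3,0)$ and $(2n+1,\tfrac{(2n+1)^2+3}{4},2)$; your additional appeal to Theorem~\ref{main theorem}(a) for finiteness is a harmless stylistic difference.

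One caution on the step you single out as the crux. The parity clause for $c=2$ as literally printed (``$y$ is even whenever $x$ is odd'') is a typo: writing $x=2n+1$ gives
\[
y=\frac{(2n+1)^2+3}{4}=n^2+n+1=n(n+1)+1,
\]
which is always odd. The paper's own proof never checks this parity---it simply writes down the solution form---so your deferred reduction of $y\bmod 2$ would in fact contradict the stated parity rather than confirm it. The correct conclusion on that branch is that $y$ is odd whenever $x$ is odd (consistent with the standing hypothesis $2\nmid y$); otherwise your plan is sound and coincides with the paper's argument.
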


\begin{figure}[H]
\begin{tikzpicture}[sibling distance=8em,
]]
\tikzset{circle node/.style={shape=circle,draw=red,rounded corners,
    draw, align=center,
    top color=white, bottom color=white},square node/.style={shape=rectangle,draw=blue,
    draw, align=center,
    top color=white, bottom color=white}}
  \node[circle node] {$?$}
     child {node[square node] {$1$}edge from parent node[left]{$0$} }
     child { node[square node] {$2$} 
          edge from parent node[right]{$1$}
        }
        ;
\end{tikzpicture}
\caption{The finite valuation tree for $x^2+3$.}
\label{fig:plus3}
\end{figure}
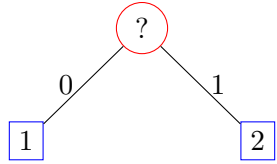

\begin{proof}
We summarize the finite tree in Figure~\ref{fig:plus3} here:
$$
\nu_{2}(n^2 + 3) = \left\{\begin{array}{cc}
0, & n\equiv 0 \mod 2  \\
2, & n \equiv 1 \mod 2. 
\end{array}
\right.
$$
The $2$-adic valuation translates to the following equation:
\[x^2+3=2^cy.\]
Our finite valuation tree says that the only possible values of $c$ (where $y$ is odd) will be $0$ and $2$.
So we have equations
\begin{align}
x^2+3 &= y \label{eqn:plus3even}\\
x^2+3 &= 2^2y.\label{eqn:plus3odd}
\end{align}

Further the valuation tree says we only have solutions for $c=0$ if $x$ is even. So let $x=2$ and find $y$ in \ref{eqn:plus3even}. So one solution for our Diophantine equation is $(2,7,0).$ We can find all solutions for the Diophantine equation similarly: $(2n,(2n)^2+3,0),$ for $n\in\ZZ.$

Now for equation ~\ref{eqn:plus3odd}, the tree says this valuation only occurs when $x$ is odd. So our solutions are $(2n+1,\frac{(2n+1)^2+3}{4},2)$, for $n\in\ZZ.$
\end{proof}

\begin{thm}
The equation $x^2 + 4 = 2^c y$ has no solutions when $c \neq 0, 2, 3$.  
\end{thm}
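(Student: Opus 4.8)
The plan is to pin down the exact $2$-adic valuation range of the sequence $x^2+4$ by splitting on the parity of $x$, and then translate this range into the admissible set of $c$ exactly as in Section~\ref{sec:tree}. Note first that $4 \neq 2^{2k}(8j+7)$ for any $k,j \in \mathbb{Z}_{\geq 0}$ (this would force $8j+7 = 1$), so Theorem~\ref{main theorem}(a) already guarantees that the valuation tree for $x^2+4$ is finite and hence that only finitely many $c$ occur; the remaining work is to identify them.

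First I would dispose of the case $x$ odd: then $x^2$ is odd, so $x^2+4$ is odd and $\nu_2(x^2+4) = 0$, giving $c = 0$ with $y = x^2+4$ odd. Next, for $x$ even write $x = 2m$, so that
\[
x^2 + 4 = 4(m^2+1), \qquad \nu_2(x^2+4) = 2 + \nu_2(m^2+1).
\]
Thus the valuation tree for $x^2+4$ below its first level is governed entirely by the tree for $m^2+1$. By the finite tree in Figure~\ref{fig:plus1}, $\nu_2(m^2+1) = 0$ when $m$ is even and $\nu_2(m^2+1) = 1$ when $m$ is odd, and no other values occur. Hence for $x \equiv 0 \mod 4$ we get $\nu_2(x^2+4) = 2$, and for $x \equiv 2 \mod 4$ we get $\nu_2(x^2+4) = 3$.

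Combining the two cases, the $2$-adic valuation range of $x^2+4$ is exactly $\{0,2,3\}$; in particular $c = 1$ never occurs, since $x^2+4$ is either odd or divisible by $4$ and so is never $\equiv 2 \mod 4$. Translating back as in Section~\ref{sec:tree}, the equation $x^2+4 = 2^c y$ with $y$ odd has solutions precisely for $c \in \{0,2,3\}$, realized by $x$ odd, $x \equiv 0 \mod 4$, and $x \equiv 2 \mod 4$ respectively, with $y = (x^2+4)/2^c$. I do not expect a genuine obstacle here: the only point requiring any care is the reduction $x^2+4 = 4(m^2+1)$ together with the appeal to the already-established $x^2+1$ tree to rule out valuations $\geq 4$, and the rest is a one-line parity check.
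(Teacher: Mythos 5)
Your proposal is correct, and it reaches the same endpoint as the paper --- the valuation range of $x^2+4$ is exactly $\{0,2,3\}$, attained on the residue classes $1 \bmod 2$, $0 \bmod 4$, and $2 \bmod 4$ respectively, after which one reads off $y = (x^2+4)/2^c$. The difference is in how that range is justified. The paper simply states the finite tree for $x^2+4$ (Figure~\ref{fig:plus4}), taking its correctness from the explicit trees computed in \cite{reuf2020}, and then translates each node into a family of solutions. You instead \emph{derive} the tree: the identity $x^2+4 = 4(m^2+1)$ for $x = 2m$ reduces everything below the first level to the already-established two-node tree for $m^2+1$ (Figure~\ref{fig:plus1}), so the shift $\nu_2(x^2+4) = 2+\nu_2(m^2+1)$ immediately gives the values $2$ and $3$ and rules out anything $\geq 4$. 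Your route is more self-contained and makes transparent \emph{why} the tree for $D=4$ is a translate of the tree for $D=1$ grafted onto the even branch --- which is exactly the mechanism behind the paper's condition $D = 2^{2k}(8j+7)$ in Theorem~\ref{main theorem}, where dividing out $2^{2k}$ reduces to the odd case. The paper's version is shorter but leans on an external computation; yours costs one extra line and proves the tree from scratch. Both are complete proofs of the stated theorem.
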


 \begin{figure}[H]
\begin{tikzpicture}[sibling distance=8em,
]]
\tikzstyle{level 3}=[level distance=15mm,sibling distance=1.5cm]
\tikzstyle{level 4}=[level distance=15mm,sibling distance=1.2cm]
\tikzset{circle node/.style={shape=circle,draw=red,rounded corners,
    draw, align=center,
    top color=white, bottom color=white},square node/.style={shape=rectangle,draw=blue,
    draw, align=center,
    top color=white, bottom color=white}}
  \node[circle node] {$?$}
     child {node[circle node] {$2$}
        child { node[square node]  {2}  edge from parent node[left]{$0$} }
        child { node[square node]  {3} edge from parent node[right]{$2$} }
     edge from parent node[left]{$0$} 
     }
     child { node[square node] {$0$}
      edge from parent node[right]{$1$} 
    } 
        ;
\end{tikzpicture}
\caption{The finite valuation tree for $x^2+4.$}
\label{fig:plus4}
\end{figure}
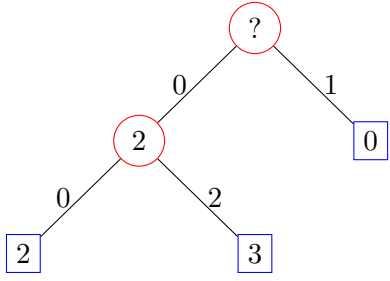

\begin{proof}
We summarize our valuation tree in Figure~\ref{fig:plus4} here:
$$
\nu_{2}(n^2 + 4) = \left\{\begin{array}{cc}
0, & n\equiv 1 \mod 2  \\
2, & n \equiv 0 \mod 4\\
3, & n \equiv 2 \mod 4.
\end{array}
\right.
$$
Again, the $2$-adic valuation translates to the following equation:
\[x^2+3=2^cy.\]
Our finite valuation tree says that the only possible values of $c$ (where $y$ is odd) will be $0,\ 2,$ and $3$.
So we have equations
\begin{align}
x^2+4 &= y \label{eqn:plus4odd}\\
x^2+4 &= 2^2y.\label{eqn:plus4divby4}\\
x^2+4 &= 2^3y.\label{eqn:plus4notdivby4}
\end{align}

Further the valuation tree says we only have solutions for $c=0$ if $x$ is odd. So let $x=1$ and find $y$ in \ref{eqn:plus4odd}. So one solution for our Diophantine equation is $(1,5,0).$ We can find all solutions for the Diophantine equation similarly: $(2n+1,(2n+1)^2+4,0),$ for $n\in\ZZ.$

Now for equation ~\ref{eqn:plus4divby4}, the tree says this valuation only occurs when $x$ is divisible by 4. So our solutions are $(2^2n,\frac{(2^2n)^2+4}{4},2)$, for $n\in\ZZ.$

Finally, for equation ~\ref{eqn:plus4notdivby4}, the tree says this valuation only occurs when $x$ is $2\mod 4$. So our solutions are $(2^2n+2,\frac{(2^2n+2)^2+4}{8},2)$, for $n\in\ZZ.$

\end{proof}

Now we will work out an example for our cubic Diophnatine equation where $D=8^k(2j+1)=1$. So we are working with the Diophantine equation $x^3 + 1 = 2^c y$.  The valuation tree is shown in Figure~\ref{fig:cubeplus1}.

 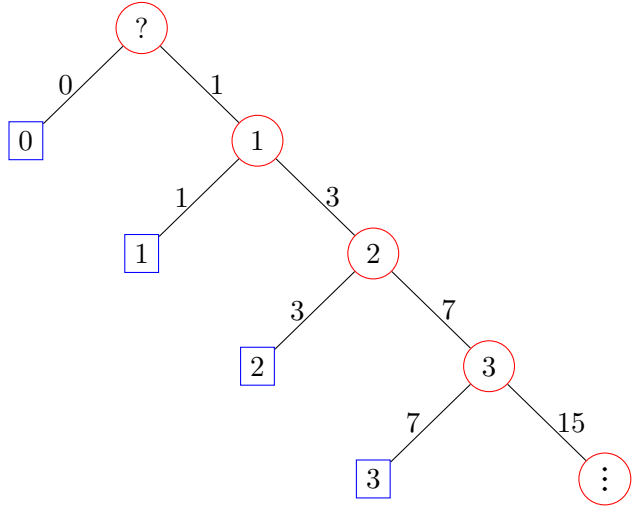
\begin{figure}[H]
\begin{tikzpicture}[sibling distance=8em,
]]
\tikzset{circle node/.style={shape=circle,draw=red,rounded corners,
    draw, align=center,
    top color=white, bottom color=white},square node/.style={shape=rectangle,draw=blue,
    draw, align=center,
    top color=white, bottom color=white}}
  \node[circle node] {$?$}
     child {node[square node] {0}edge from parent node[left]{$0$} }
     child { node[circle node] {$1$} 
        child {node[square node] {$1$}
        edge from parent node[left]{$1$}
        }
        child {node[circle node] {$2$}
            child {node[square node] {$2$}
            edge from parent node[left]{$3$} }
            child {node[circle node] {$3$}
                child {node[square node] {$3$} edge from parent node[left]{$7$}  }
                child {node[circle node] {$\vdots$}
                     edge from parent node[right]{$15$}  }
        edge from parent node[right]{$7$}     
            }  edge from parent node[right]{$3$}  
        }edge from parent node[right]{$1$}
        }
        ;
\end{tikzpicture}
\caption{The infinite valuation tree for $x^3+1$ where the tree continues to split on the right most node indefinitely.}
\label{fig:cubeplus1}
\end{figure}

We'll see from the theorem below and the tree that the 2-adic valuation tree for $\nu_{2}(n^3 + 1)$ has range of $\ZZ_{\geq 0}$. Also at each level $c$, there are two branches yielding one terminating node with value $c-1$ and one non-terminating node with minimum value $c$. 
\begin{thm}
The solutions of the cubic Diophantine equation $x^3 + 1 = 2^c y,$ $y$ odd, follow from
\begin{equation}\label{range cubic D=1}
\nu_{2}(n^3 + 1) =\left\{
\begin{array}{cc}
0, & n \equiv 0 \mod 2^1\\
1, & n \equiv 1 \mod 2^2\\
2, & n \equiv 3 \mod 2^3\\
3, & n \equiv 7 \mod 2^4\\
4, & n \equiv 15 \mod 2^5\\
\vdots\\
c, & n \equiv 2^{c}-1 \mod 2^{c+1}
\end{array}
\right.
\end{equation}
So there are solutions for all $c\in\ZZ_{\geq 0}$ with corresponding $x\equiv 2^c-1\mod 2^{c+1}$
\end{thm}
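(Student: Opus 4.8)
The plan is to establish the displayed formula \eqref{range cubic D=1} by induction on $c$, essentially computing the valuation tree level by level exactly as in Section~\ref{sec:tree}, and then read off the Diophantine consequence. First I would dispose of the base case: if $n$ is even then $n^3+1$ is odd, so $\nu_2(n^3+1)=0$, which handles the residue class $n\equiv 0\bmod 2$. This is also the first branching in Figure~\ref{fig:cubeplus1}: the ``$0$'' edge terminates at node $0$, and all further structure lives under the ``$1$'' edge. So from now on I restrict to $n$ odd and show that among odd residues the valuation is governed by how close $n$ is to $-1$ in the $2$-adic metric, i.e. by $\nu_2(n+1)$.

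The key computational step is the factorization $n^3+1=(n+1)(n^2-n+1)$. For $n$ odd, $n^2-n+1$ is odd (it is a sum of an even and an odd), so $\nu_2(n^3+1)=\nu_2(n+1)$. This immediately collapses the problem: $\nu_2(n^3+1)=c$ with $c\ge 1$ exactly when $\nu_2(n+1)=c$, that is, when $n+1\equiv 2^c\bmod 2^{c+1}$, equivalently $n\equiv 2^c-1\bmod 2^{c+1}$. That is precisely the right-hand column of \eqref{range cubic D=1}. I would present this as: for each $c\ge 1$, the residue class $n\equiv 2^c-1\bmod 2^{c+1}$ gives exact valuation $c$, while the complementary class $n\equiv 2^{c-1}-1\bmod 2^{c+1}$ (wait--more carefully) the odd residues mod $2^{c+1}$ split into those with $\nu_2(n+1)=c$ and those with $\nu_2(n+1)>c$ versus $<c$; the upshot is the single infinite descending branch depicted in the figure. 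This also shows the range of $\nu_2(n^3+1)$ is all of $\ZZ_{\ge 0}$, confirming the remark preceding the theorem and agreeing with Proposition~\ref{prop:deg3finite} (the case $D=1=8^0(2\cdot 0+1)$, which gives an infinite tree).

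For the Diophantine conclusion, I would argue exactly as in Section~\ref{sec:proof}: given $c\in\ZZ_{\ge 0}$, pick any $x$ with $x\equiv 2^c-1\bmod 2^{c+1}$ (or $x$ even when $c=0$); then $\nu_2(x^3+1)=c$, so $x^3+1=2^c y$ with $y$ odd, and $y=(x^3+1)/2^c$ is a positive integer for $x\ge 1$. Hence there is a nontrivial solution for every $c\ge 0$, with the stated form of $x$.

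The only mild obstacle is bookkeeping: one must be careful that the factor $n^2-n+1$ is genuinely odd for \emph{all} odd $n$ (check $n\equiv 1$ and $n\equiv 3\bmod 4$, or just note $n^2-n=n(n-1)$ is always even so $n^2-n+1$ is always odd, with no parity assumption on $n$ needed at all), and that the induction's ``complementary branch terminates'' claim matches the picture in Figure~\ref{fig:cubeplus1}. Since the factorization trivializes the valuation, I do not expect any real difficulty; the bulk of the write-up is simply translating $\nu_2(n+1)=c$ into the modular condition $n\equiv 2^c-1\bmod 2^{c+1}$ and then into the solution triple $(x,y,c)$.
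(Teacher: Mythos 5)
Your proof is correct, and it takes a genuinely different (and in fact stronger) route than the paper's. The paper fixes $c$, takes the single representative $n = 2^c - 1$, expands $(2^c-1)^3 + 1 = 2^c(2^{2c} - 3\cdot 2^c + 3)$ by the binomial theorem, and observes the cofactor is odd; it then appeals to the tree's rightmost branch to cover the rest of the residue class. You instead use the factorization $n^3 + 1 = (n+1)(n^2 - n + 1)$ together with the observation that $n^2 - n + 1 = n(n-1) + 1$ is odd for every integer $n$, so that $\nu_2(n^3+1) = \nu_2(n+1)$ identically. This buys you more: it proves the exact valuation on the \emph{entire} residue class $n \equiv 2^c - 1 \pmod{2^{c+1}}$ in one stroke (the paper's displayed computation literally only treats the representative $2^c-1$, leaving the general class to the tree heuristic), it shows at once that the valuation range is all of $\ZZ_{\geq 0}$, and it makes the whole tree of Figure~\ref{fig:cubeplus1} a corollary of the single identity $\nu_2(n^3+1)=\nu_2(n+1)$ rather than something built level by level. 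The brief stumble in your write-up over naming the ``complementary'' odd classes is harmless, since the factorization already determines the valuation of every $n$; you could simply delete that sentence. The translation to solution triples $(x, y, c)$ at the end matches the paper.
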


\begin{proof}
First we show that the tree is infinite and will have the valuations as stated. Fix $c \geq 0$. Suppose $n = 2^c - 1 \mod 2^{c+1}.$  Then
$$
\begin{array}{lll}
(2^c - 1)^3 + 1 &=& 2^{3c} - 3\cdot 2^{2c} +  3\cdot 2^c - 1 + 1\\ 
&=& 2^{c}(2^{2c} - 3\cdot 2^{c} +  3).\\
\end{array}
$$
Observe that $2^{2c} - 3\cdot 2^{c} +  3$ is odd for any $c$.

Hence we know that solutions will be of the form $(x,y,c)$ where $c\in \ZZ_{\geq 0}.$ For each $c$, we have a corresponding $x$ value that we know is of the form $x\equiv 2^c-1\mod 2^{c+1}$ because the right most branch in the tree is the one that will always continue. 
\end{proof}

\section{Conclusion}\label{sec:conclusion}
What is interesting about this approach is that it gives us a handle on Diophantine equations of the form $x^\ell+D=p^cy,$ for any prime $p$. Through studying classifications of $x$ and creating our valuation trees we're able to determine, for which $c$, $x^\ell +D=p^cy$ has non-trivial integer solutions. 

For example, if we were to have studied $x^2+7=2^cy$ with a traditional tool such as the one in Bilu, Hanrot, and Voutier in \cite{bhvlucas} we would have discovered that there are finitely many solutions for $c=1,2$ and $3$, but are unable to determine for $c>3.$ 

So one direction we want to head is to understand the $p$-adic valuation trees of $x^2+D$ or $x^3+D$. It would also be interesting to understand other polynomials. Then we can use these results to study the related Diophantine equations.

\subsection*{Acknowledgements} We would like to thank REUF, as the original research questions began at the one week workshop. We'd like to thank Justin Trulen and Jane Long for all the helpful discussions and recommendations on the citations. We want to thank Diego Villamizar Rubiano for his input on the paper in its final stages. We especially want to thank Zoom for working so well and having a good ``share screen" feature. Sadly, no coffee shops were visited in the making of this article. Bianca Thompson was partially supported by the Gore Summer Research Grant through part of this project.

\appendix
\section{Tables of example solutions}\label{sec:app}

\begin{table}[h]
\[\begin{array}{l|l|l}
c & x_{c} & y_{c} \\
\hline
3 & 1 & 1\\
4 & 3 & 1\\
5 & 5 & 1\\
6 & 11 & 2\\
7 & 11 & 1\\
8 & 53 & 11\\
9 & 75 & 11\\
10 & 181 & 32 \\
11 & 181 & 16\\
12 & 181 & 8\\
13 & 181 & 4\\
14 & 181 & 2\\
15 & 181 & 1\\
16 & 16203 & 4006\\
17 & 16203 & 2003\\
18 & 49333 & 9284\\
19 & 49333 & 4642\\
20 & 49333 & 2321\\
21 & 474955 & 107566\\
22 & 474955 & 53783\\
23 & 1622197 & 313702\\
24 & 1622197 & 156851\\
25 & 6766411 & 1364479\\
26 & 10010805 & 1493338\\
\hline
\end{array}\]
\caption{Some example solutions to our Diophantine equation $x^2+7=2^cy$, $y$ odd, using our recursion.}
\label{tab:recursion}
\end{table}

Tables~\ref{table:deg2} and~\ref{table:deg3} explore solutions for $x^2 + D = 2^c y$ and $x^3+D=2^cy$. We showcase some of the solutions $(x,y,c)$ if we're given values of $D$.

\begin{table}[H]
    \centering
    \begin{tabular}{l|l|l}
    $D$ & $c$ &$(x,y,c)$\\
    \hline
    8 & 0, 2, 3 & $(1,9,0),\ (2,3,2),\ (4,3,3)$\\
    9 & 0, 1 & $(2,13,0),\ (1,5,1)$\\
    10 & 0, 1 & $(1,11,0),\ (2,7,1)$\\
    11 & 0, 2 & $(2,15,0),\ (1,3,2)$\\
    12 & 0, 2, 4 & $(1,13,0),\ (4,7,2),\ (2,1,4)$\\
    13 & 0, 1 & $(2,17,0),\ (1,7,1)$\\
    14 & 0, 1 & $(1,15,0),\ (2,9,1)$\\
    16 & 0, 2, 4, 5 & $(1,17,0),\ (2,5,2),\ (8,5,4),\ (4,1,5)$\\
    17 & 0, 1 & $(2,21,0),\ (1,9,1)$\\
    18 & 0, 1 & $(1,19,0),\ (2,11,1)$\\
    19 & 0, 2 & $(2,23,0),\ (1,5,2)$\\
    20 & 0, 2, 3 & $(1,21,0),\ (4,9,2),\ (2,3,3)$\\
    21 & 0, 1 & $(2,25,0),\ (1,11,1)$\\
    22 & 0, 1& $(1,23,0),\ (2,13,1)$\\
    24 & 0, 2, 3& $(1,25,0),\ (2,7,2),\ (4,5,3)$\\
    25 & 0, 1& $(2,29,0),\ (1,13,1)$\\
    26 & 0, 1& $(1,27,0),\ (2,15,1)$\\
    27 & 0, 2& $(2,31,0),\ (1,7,2)$\\
    29 & 0, 1& $(2,33,0),\ (1,15,1)$\\
    30 & 0, 1& $(1,31,0),\ (2,17,1)$\\
    32 & 0, 2, 4, 5& $(1,33,0),\ (2,9,2),\ (4,3,4),\ (8,3,5)$\\
    33 & 0, 1& $(2,37,0),\ (1,17,1)$\\
    34 & 0, 1& $(1,35,0),\ (2,19,1)$\\
    35 & 0, 2& $(2,39,0),\ (1,9,2)$\\
    36 & 0, 2, 3& $(1,37,0),\ (4,13,2),\ (2,5,3)$\\
    37 & 0, 1& $(2,41,0),\ (1,19,1)$\\
    38 & 0, 1& $(1,39,0),\ (2,21,1)$\\
    40 & 0, 2, 3&$(1,41,0),\ (2,11,2),\ (4,7,3)$
    \end{tabular}
    \caption{Examples of some solutions for $x^2+D=2^cy$ for specific $D$ values.}
    \label{table:deg2}
\end{table}

\begin{table}[H]
    \centering
    \begin{tabular}{l|l|l}
    $D$ & $c$ & $(x,y,c)$\\
    \hline
    2 & 0, 1 & $(1,3,0),\ (2,5,1)$\\
    4 & 0, 2 & $(1,5,0),\ (2,3,2)$\\
    6 & 0, 1 &$(1,7,0),\ (2,7,1)$\\
    10 & 0, 1 & $(1,11,0),\ (2,9,1)$\\
    12 & 0, 2 & $(1,13,0),\ (2,5,2)$\\
    14 & 0, 1 &$(1,15,0),\ (2,11,1)$\\
    16 & 0, 3, 4 & $(1,17,0),\ (2,3,3),\ (4,5,4)$\\
    18 & 0, 1 & $(1,19,0),\ (2,13,1)$\\
    20 & 0, 2 &$(1,21,0),\ (2,7,2)$\\
    22 & 0, 1 & $(1,23,0),\ (2,15,1)$\\
    26 & 0, 1 & $(1,27,0),\ (2,17,1)$\\
    28 & 0, 2 & $(1,29,0),\ (2,9,2)$\\
    30 & 0, 1 & $(1,31,0),\ (2,19,1)$\\
    32 & 0, 3, 5 &$(1,33,0),\ (2,5,3),\ (4,3,5)$\\
    34 & 0, 1 & $(1,35,0),\ (2,21,1)$\\
    36 & 0, 2 & $(1,37,0),\ (2,11,2)$\\
    38 & 0, 1 & $(1,39,0),\ (2,23,1)$\\
    42 & 0, 1 & $(1,43,0),\ (2,25,1)$\\
    44 & 0, 2 & $(1,45,0),\ (2,13,2)$\\
    46 & 0, 1 & $(1,47,0),\ (2,27,1)$
    \end{tabular}
    \caption{Examples of some solutions for $x^3+D=2^cy$ for specific $D$ values.}
    \label{table:deg3}
\end{table}

\bibliographystyle{plain}
\bibliography{ref}

\end{document}